\documentclass[12pt,dvips]{amsart}

\usepackage[dvips]{epsfig}
\usepackage{fancyhdr}
\usepackage{afterpage}
\usepackage{chngcntr}
\usepackage{graphicx}
\usepackage{latexsym}
\usepackage{amsfonts}
\usepackage{amscd}
\usepackage{amssymb}
\usepackage{amsmath}
\usepackage{amsthm}
\usepackage{bm}
\usepackage{psfrag}
\usepackage{url}
\usepackage{lscape}
\usepackage[usenames]{color}
\usepackage[all]{xy}
\usepackage{enumitem}

\usepackage[lmargin=1.25in,rmargin=1.25in,tmargin=1.25in,bmargin=1.25in]{geometry}

\usepackage[unicode, breaklinks=true, linktocpage, bookmarksnumbered=true,backref=true,letterpaper]{hyperref}

\newtheorem{prop}{Proposition} [section]
\newtheorem{lemma}[prop]{Lemma}
\newtheorem{thm}[prop]{Theorem}

\theoremstyle{definition}

\def\N{\mathbb{N}}
\def\R{\mathbb{R}}
\def\C{\mathbb{C}}
\def\Z{\mathbb{Z}}
\def\Q{\mathbb{Q}}

\def\CF {\widehat{\operatorname{CF}}}

\def\HFK {\widehat{\operatorname{HFK}}}

\def\HF {\widehat{\operatorname{HF}}}

\def\CC {\mathcal{C}}

\def\HH {\mathcal{H}}

\def\TT {\mathcal{T}}

\newcommand{\abs}[1] {\left\lvert #1 \right\rvert}

\def\minus{\smallsetminus}

\definecolor{darkblue}{rgb}{0,0,0.5}
\definecolor{darkred}{rgb}{0.5,0,0}
\definecolor{darkgreen}{rgb}{0,0.5,0}
\definecolor{purple}{rgb}{0.5,0,0.5}
\definecolor{teal}{rgb}{0,0.5,0.5}

\title
 [Slicing Mixed Bing--Whitehead Doubles]
 {Slicing Mixed Bing--Whitehead Doubles}

\author{Adam Simon Levine}
\address {Mathematics Department \\ Brandeis University \\ 415 South Street \\ Waltham, MA 02453}
\email {levinea@brandeis.edu}

\thanks{The author was supported by NSF grants DMS-0739392 and DMS-1004622.}

\begin{document}

\maketitle

\begin{abstract}
We show that if $K$ is any knot whose Ozsv\'ath--Szab\'o concordance invariant
$\tau(K)$ is positive, the all-positive Whitehead double of any iterated Bing
double of $K$ is topologically but not smoothly slice. We also show that the
all-positive Whitehead double of any iterated Bing double of the Hopf link
(e.g., the all-positive Whitehead double of the Borromean rings) is not
smoothly slice; it is not known whether these links are topologically slice.
\end{abstract}

\section{Introduction}

A knot in the $3$-sphere is called \emph{topologically slice} if it bounds a
locally flatly embedded disk in the $4$-ball, and \emph{smoothly slice} if the
disk can be taken to be smoothly embedded. Two knots are called (topologically
or smoothly) \emph{concordant} if they are the ends of an embedded annulus in
$S^3 \times I$; thus, a knot is slice if and only if it is concordant to the
unknot. More generally, a link is (topologically or smoothly) \emph{slice} if
it bounds a disjoint union of appropriately embedded disks. The study of
concordance --- especially regarding the relationship between the notions of
topological and smooth sliceness --- is one of the major areas of active
research in knot theory, and it is closely tied to the perplexing differences
between topological and smooth $4$-manifold theory.

Given a knot $K \subset S^3$, the \emph{(untwisted) positive and negative
Whitehead doubles} of $K$, $Wh_+(K)$ and $Wh_-(K)$, and the \emph{Bing double}
of $K$, $B(K)$, are the satellites of $K$ illustrated in Figure
\ref{fig:figure8} (for $K$ the figure-eight knot). The Whitehead doubles of a
link $L$ are obtained by doubling the individual components of $L$, with a
choice of sign for each component. In particular, we denote the all-positive
and all-negative Whitehead doubles of $L$ by $Wh_+(L)$ and $Wh_-(L)$,
respectively.

Whitehead and Bing doubling play a central role in the study of concordance. In
the topological setting, Freedman \cite{FreedmanNewTechnique, FreedmanQuinn}
proved that any Whitehead double of a knot or, more generally, a \emph{boundary
link} (a link whose components bound disjoint Seifert surfaces) is
topologically slice. Moreover, the surgery conjecture for $4$-manifolds with
arbitrary fundamental group --- the central open problem in four-dimensional
topology --- is equivalent to the conjecture that the Whitehead double of any
link whose linking numbers are all zero is freely topologically
slice.\footnote{A link $L$ is \emph{freely slice} if it bounds slice disks in
$B^4$ whose complement has free fundamental group.} This conjecture is true for
two-component links \cite{FreedmanWhitehead3} but open in general. To disprove
the surgery conjecture for manifolds with free fundamental group, it would thus
suffice to show that one such link --- e.g., a Whitehead double of the
Borromean rings --- is not (freely) topologically slice. However, all such
links have resisted all attempts to determine whether or not they are
topologically slice.

\psfrag{Wh+(K)}{$Wh_+(K)$} \psfrag{Wh-(K)}{$Wh_-(K)$} \psfrag{BD(K)}{$BD(K)$}
\psfrag{K}{$K$}

\begin{figure}
\includegraphics{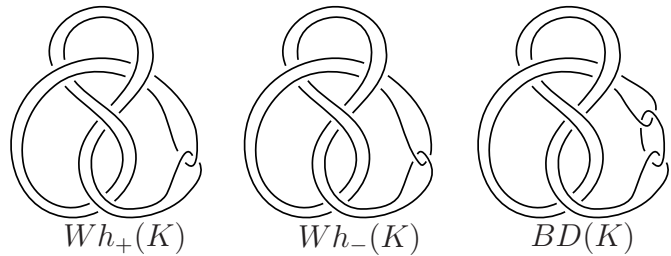}
\caption{The positive and negative Whitehead doubles and the Bing double of the
figure-eight knot.} \label{fig:figure8}
\end{figure}

Around the same time, the advent of Donaldson's gauge theory made it possible
to show that some of Freedman's examples of topologically slice knots are not
smoothly slice. Akbulut [unpublished] first proved in 1983 that the positive,
untwisted Whitehead double of the right-handed trefoil is not smoothly slice.
Later, using results of Kronheimer and Mrowka on Seiberg--Witten theory,
Rudolph \cite{RudolphObstruction} showed that any nontrivial knot that is
\emph{strongly quasipositive} cannot be smoothly slice. In particular, the
positive, untwisted Whitehead double of a strongly quasipositive knot is
strongly quasipositive; thus, by induction, any iterated positive Whitehead
double of a strongly quasipositive knot is topologically but not smoothly
slice. Bi\v{z}aca \cite{Bizaca} used this result to give explicit constructions
of exotic smooth structures on $\R^4$. Later, Hedden \cite{HeddenWhitehead}
generalized this result to any knot $K$ whose Ozsv\'ath--Szab\'o invariant
$\tau(K)$ (an integer-valued concordance invariant coming from the knot Floer
homology of $K$ \cite{OSz4Genus, RasmussenThesis}) is positive. It is
conjectured \cite[Problem 1.38]{KirbyList} that if $Wh_\pm(K)$ is smoothly
slice, then $K$ itself must also be smoothly slice.

\begin{figure}
\includegraphics[scale=0.75]{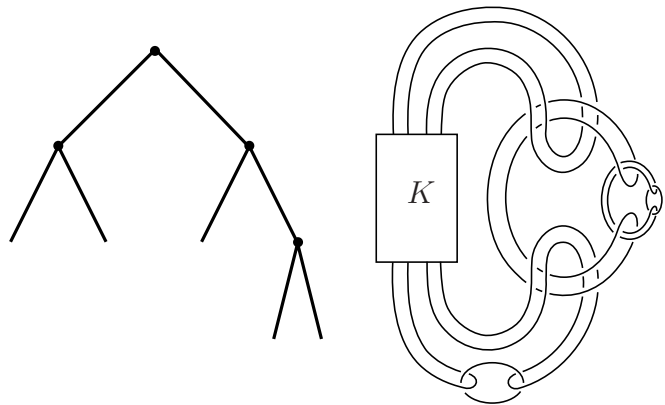}
\caption{A binary tree $T$ and the corresponding iterated Bing double
$B_T(K)$.} \label{fig:tree}
\end{figure}

We may consider \emph{partially iterated Bing doubles} of any link: at each
stage in the iteration, we replace some component by its Bing double.
Specifically, given a knot $K$, a binary tree $T$ specifies such a link
$B_T(K)$, as illustrated in Figure \ref{fig:tree}, with one component for each
leaf of $T$. For a link $L=K_1 \cup \cdots \cup K_n$ and binary trees $T_1,
\dots, T_n$, we may similarly obtain a link $B_{T_1, \dots, T_n}(L) =
B_{T_1}(K_1) \cup \cdots \cup B_{T_n}(K_n)$. In particular, if $H$ is the Hopf
link, the links obtained in this manner are known as \emph{generalized
Borromean links}, since Bing doubling one component of $H$ yields the Borromean
rings.

Using the author's work in \cite{LevineDoublingOperators} --- a lengthy
computation of $\tau$ for a particular family of satellite knots --- we shall
prove:

\begin{thm} \label{thm:notslice}
\begin{enumerate}
\item Let $K$ be a knot with $\tau(K)>0$, and let $T$ be any
binary tree. Then the all-positive Whitehead double of $B_T(K)$,
$Wh_+(B_T(K))$, is not smoothly slice.
\item Let $H = K_1 \cup K_2$ denote the Hopf link, and let $T_1,T_2$ be
binary trees. Then $Wh_+(B_{T_1,T_2}(H))$ is not smoothly slice.
\end{enumerate}
\end{thm}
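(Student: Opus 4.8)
The plan is to deduce non-sliceness from the smooth concordance invariant $\tau$, which vanishes on smoothly slice knots, by fusing the link into a single knot whose $\tau$ is controlled by the satellite computations of \cite{LevineDoublingOperators}. The underlying mechanism is an Euler-characteristic count: if a link $L = L_1 \cup \cdots \cup L_n$ is smoothly slice, with disjoint smooth slice disks $D_1, \dots, D_n \subset B^4$, and if $b_1, \dots, b_{n-1}$ are disjoint bands in $S^3 = \partial B^4$ whose cores fuse the components into a single knot $J$, then $J$ bounds the surface $\Sigma = \bigcup_i D_i \cup \bigcup_j (b_j \times I)$, with the band rectangles pushed slightly into $B^4$. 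Since $\Sigma$ is connected with one boundary component and $\chi(\Sigma) = n - (n-1) = 1$, it is a smoothly embedded disk; hence $J$ is smoothly slice and $\tau(J) = 0$. It therefore suffices to exhibit, for $Wh_+(B_T(K))$ and for $Wh_+(B_{T_1,T_2}(H))$, a choice of such bands for which the resulting knot $J$ satisfies $\tau(J) > 0$.

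The next step is to identify $J$. Reading the binary tree from the leaves inward, I would place one band at each internal vertex, fusing the two Whitehead-doubled strands emanating from that vertex. I claim that the band at a vertex merges the local positively-clasped Bing pair into a single positively-clasped doubling pattern, so that the total band sum realizes $K$ (resp.\ a component of $H$) under an iterated satellite operator $D = D_T$ obtained by composing, along $T$, the doubling operators whose knot Floer homology is computed in \cite{LevineDoublingOperators}. This identification is the geometric heart of the argument: one must verify that the bands can be chosen disjointly and simultaneously, that each fusion preserves the positive sign of the clasp (which is exactly what the all-positive hypothesis supplies), and that the composite pattern is the one appearing in the satellite computation.

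With $J = D_T(K)$ so identified, I would invoke the main computation of \cite{LevineDoublingOperators}, namely that $D_T$ has the property $\tau(D_T(K)) > 0$ whenever $\tau(K) > 0$; this gives part (1). The simplest case, $T$ a single leaf, has $Wh_+(B_T(K)) = Wh_+(K)$ and recovers Hedden's theorem \cite{HeddenWhitehead} that $\tau(Wh_+(K)) = 1$. For part (2), both components of the Hopf link are unknotted, so the same reduction expresses $J$ as $D_{T_1,T_2}$ applied to an unknot; now the positive Hopf clasp between $K_1$ and $K_2$ survives the fusions and seeds the positive contribution, and the corresponding satellite computation again yields $\tau(J) > 0$ with no hypothesis on an ambient $\tau$. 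In both cases $\tau(J) > 0$ contradicts $\tau(J) = 0$, so the link is not smoothly slice.

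I expect the main obstacle to be the identification in the second step: matching the banded knot to a satellite whose $\tau$ is already known. The subtlety is twofold: organizing the bands along the tree so that fusing a Bing pair reproduces a single doubling operator (rather than a more complicated tangle replacement), and tracking signs so that the all-positive clasps persist through every fusion and feed the positivity in the $\tau$ formula. Once the banded knot is matched to the operators of \cite{LevineDoublingOperators}, the conclusion is immediate; thus essentially all of the genuinely new topological content lives in this band-sum bookkeeping, while the hard Floer-theoretic input remains quarantined in the cited computation.
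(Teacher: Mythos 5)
Your reduction mechanism is fine: a fusion (band sum along $n-1$ bands joining all $n$ components) of a smoothly slice link is a smoothly slice knot, by exactly the Euler-characteristic argument you give, so it would suffice to exhibit one fusion with $\tau>0$. The genuine gap is the identification step, which you correctly flag as the heart of the argument but never carry out --- and which, as stated, is false. Fusing satellites is not the same as taking a satellite of the fusion: band-summing the two \emph{companion} curves $A_1,A_2$ of a Bing pair does produce a single Whitehead-type pattern, but band-summing $Wh_+(A_1)$ and $Wh_+(A_2)$ does \emph{not} produce a double of the fused companion, because the two clasps survive in two separate tubes. The fused knot is a satellite of $K$ whose pattern lives in a genus-$2$ handlebody (a neighborhood of $A_1\cup b\cup A_2$), not a nested tower of solid tori. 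Concretely: every knot of the form $D_{P_k,s_k}\circ\cdots\circ D_{P_1,s_1}(K)$ bounds a genus-one Seifert surface (it is the boundary of a plumbing of two annuli $A(P_k,s_k)*A(\,\cdot\,,0)$, as recalled in Section \ref{sec:quasipositive}), whereas the fusion of the $n$ components of $Wh_+(B_T(K))$ along $n-1$ bands naturally bounds a surface of genus $n$, one clasp per leaf. You give no reason why any choice of bands should collapse this to a genus-one nested doubling tower, and without that, the computation of \cite{LevineDoublingOperators} --- which applies only to the genus-one patterns $D_{J,s}(\cdot,t)$ and their compositions --- simply cannot be invoked for the fused knot. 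The same objection applies to your part (2), where ``the positive Hopf clasp seeds the positive contribution'' has no precise meaning in your setup.

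This is exactly why the paper does not use fusion but \emph{covering link calculus}. The second covering move (the $2$-fold branched cover over one unknotted component) is the mechanism that transfers one component's doubling data onto the other as an additional satellite layer: in Lemma \ref{lemma:solidtorus}, branching over $K_2$ turns the remaining component into $D_{P_k,s_k}\circ\cdots\circ D_{P_1,s_1}\circ D_{R,u}(C)$, where $R$ involves $Q_l\# Q_l^r$ and doubled twists $2t_l$ --- features produced by the two-fold cover and invisible to any band sum. This keeps every knot produced by the induction inside the class covered by Theorem \ref{thm:taudjskt} (Proposition \ref{prop:collapse} checks the inequalities $u_i<2\tau(R_i)$ persist), and it is also why the paper needs the stronger fact that $\tau$ obstructs smooth sliceness in rational homology $4$-balls, since branched covers change the ambient $4$-manifold. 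If you want to salvage a fusion-based proof, you would have to compute $\tau$ for satellites with genus-two handlebody patterns, which is precisely the computation that is not available.
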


Note that for any knot $K$, $B_T(K)$ is always a boundary link (see
\cite{CimasoniSlicing} for a proof), so any Whitehead double of $B_T(K)$ (with
clasps of either sign) is topologically slice. Thus, part (1) of Theorem
\ref{thm:notslice} provides a large family of links that are topologically but
not smoothly slice.\footnote{The question of when iterated Bing doubles of a
knot are slice is also quite challenging, since the classical sliceness
obstructions vanish for iterated Bing doubles. Recent papers by Cimasoni
\cite{CimasoniSlicing} and Cha--Livingston--Ruberman
\cite{ChaLivingstonRuberman}, Cha--Kim \cite{ChaKim}, and Van Cott
\cite{VanCott} show that if an iterated Bing double of $K$ is topologically
slice, then $K$ is algebraically slice; if it is smoothly slice, then
$\tau(K)=0$. Also, Cochran, Harvey, and Leidy \cite{CochranHarveyLeidyDoubling}
have used $L^2$ signatures to find algebraically slice knots with non-slice
iterated Bing doubles.} On the other hand, it is unknown whether the links in
part (2) --- the all-positive Whitehead doubles of the generalized Borromean
links --- are topologically slice. Indeed, Freedman \cite{FreedmanQuinn} showed
that the family of Whitehead doubles (with any signs) of generalized Borromean
links is ``atomic'' for the surgery problem: whether or not they are freely
topologically slice is equivalent to the surgery conjecture. Most experts
nowadays conjecture that these links are \emph{not} topologically slice, but
the problem remains unsolved after nearly twenty-five years.

The requirement that we consider all-positive Whitehead doubles is necessary
for our proof of Theorem \ref{thm:notslice}. By taking mirrors, we also see
that the all-negative Whitehead doubles of iterated Bing doubles of knots with
$\tau(K)<0$ or of generalized Borromean links are not smoothly slice, but our
method always fails when both positive and negative Whitehead doubling are
used. Indeed, all of the gauge-theoretic invariants known to date suffer from
the same asymmetry. It is still not known whether, for instance, the positive
untwisted Whitehead double of the left-handed trefoil is smoothly slice.

\subsection*{Acknowledgments}
This paper, along with \cite{LevineDoublingOperators}, made up a large portion
of the author's thesis at Columbia University. The author is grateful to his
advisor, Peter Ozsv\'ath, and the other members of his defense committee,
Robert Lipshitz, Dylan Thurston, Paul Melvin, and Denis Auroux, for their
suggestions; to Rumen Zarev, Ina Petkova, and Jen Hom for many helpful
conversations about bordered Heegaard Floer homology; and to Rob Schneiderman,
Charles Livingston, and Matthew Hedden for their suggestions regarding link
concordance questions.

\section{Definitions}

We begin by giving more precise definitions of some of the terms used in the
Introduction.

\subsection{Infection and doubling operators}
We always work with \emph{oriented} knots and links. For any knot $K \subset
S^3$, let $K^r$ denote $K$ with reversed orientation, let $\bar K$ denote the
mirror of $K$ (the image of $K$ under a reflection of $S^3$), and let $-K =
\bar K^r$. As ${K} \# {-K}$ is always smoothly slice, the concordance classes
of $K$ and $-K$ are inverses in $\CC_1$, which justifies this choice of
notation. Note that the invariants coming from Heegaard Floer homology
($\HFK(S^3,K)$, $\tau(K)$, etc.) are sensitive to mirroring but not to
reversing the orientation of a knot.

Suppose $L$ is a link in $S^3$, and $\gamma$ is an oriented curve in $S^3
\minus L$ that is unknotted in $S^3$. For any knot $K \subset S^3$ and $t \in
\Z$, we may form a new link $I_{\gamma,K,t}(L)$, the \emph{$t$-twisted
infection of $L$ by $K$ along $\gamma$}, by deleting a neighborhood of $\gamma$
and gluing in a copy of the exterior of $K$ by a map that takes a
Seifert-framed longitude of $K$ to a meridian of $\gamma$ and a meridian of $K$
to a $t$-framed longitude of $\gamma$. Since $S^3 \minus \gamma = S^1 \times
D^2$, the resulting $3$-manifold is simply $\infty$ surgery on $K$, i.e. $S^3$;
the new link $I_{\gamma,K,t}(L)$ is defined as the image of $L$. Alternately,
let $\hat K \subset D^2 \times I$ be the $(1,1)$-tangle obtained by cutting $K$
at a point, oriented from $\hat K \cap D^2 \times \{0\}$ to $\hat K \cap D^2
\times \{1\}$. If $D$ is an oriented disk in $S^3$ with boundary $\gamma$,
meeting $L$ transversely in $n$ points, we may obtain $I_{\gamma,K,t}(L)$ by
cutting open $L$ along $D$ and inserting the tangle consisting of $n$ parallel
copies of $\hat K$, following the $t$ framing. In a link diagram, a box labeled
$K,t$ in a group of parallel strands indicates $t$-twisted infection by $K$
along the boundary of a disk perpendicular to those strands. To be precise, we
adopt the following orientation convention: If the label $K,t$ is written
horizontally and right-side-up, then $\hat K$ is oriented either from bottom to
top or from left to right, depending on whether the strands meeting the box are
positioned vertically or horizontally.\footnote{We allow both types of notation
to avoid writing labels vertically.}

Given unlinked infection curves $\gamma_1, \gamma_2$, the image of $\gamma_2$
in $I_{\gamma_1, K_1, t_1}(L \cup \gamma_2)$ is again an unknot, so we may then
infect by another pair $K_2,t_2$. We obtain the same result if we infect along
$\gamma_2$ first and then $\gamma_1$. In general, given an unlink $\gamma_1,
\dots, \gamma_n$, we may infect simultaneously along all the $\gamma_i$; the
result may be denoted $I_{\gamma_1,K_1,t_1; \, \cdots; \,
\gamma_n,K_n,t_n}(L)$, and the order of the tuples $(\gamma_i,K_i,t_i)$ does
not matter.

If $P$ is a knot (or link) in the standardly embedded solid torus in $S^3$ and
$K$ is any knot, the \emph{$t$-twisted satellite of $K$ with pattern $P$},
$P(K,t)$, is defined as $I_{\gamma,K,t}(P)$, where $\gamma$ is the core of the
complementary solid torus. The knot $K$ is called the \emph{companion}. More
generally, if we have a link $L$, we may replace a component of $L$ by its
satellite with pattern $P$, working in a tubular neighborhood disjoint from the
other components.

\begin{figure}
\psfrag{B1}{$B_1$} \psfrag{B2}{$B_2$} \psfrag{B3}{$B_3$} \centering
\includegraphics{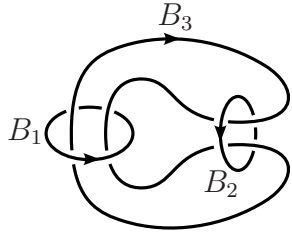}
\caption{The Borromean rings.} \label{fig:borromean}
\end{figure}

Let $B = B_1 \cup B_2 \cup B_3$ denote the Borromean rings in $S^3$, oriented
as shown in Figure \ref{fig:borromean}. The $\pm$ $t$-twisted Whitehead double
of $K$, is defined as
\[
Wh_\pm(K,t) = I_{B_1,O,\mp 1; B_2,K,t}(B_3),
\]
where $O$ denotes the unknot. (Note the sign conventions: a left-handed twist
in a pair of opposite strands is a positive clasp.) Moreover, we define the
following generalization of Whitehead doubling: for knots $J$ and $K$ and
integers $s$ and $t$, define $D_{J,s}(K,t)$ as the knot obtained from $B_3$ by
performing $s$-twisted infection by $J$ along $B_1$ and $t$-twisted infection
by $K$ along $B_2$:
\[
D_{J,s}(K,t) = I_{B_1,J,s; \, B_2,K,t}(B_3).
\]
(See Figure \ref{fig:djskt}.) The symmetries of the Borromean rings imply:
\begin{gather*}
D_{J,s}(K,t)^r = D_{J^r,s}(K,t) = D_{J,s}(K^r,t) = D_{K,t}(J,s) \\
\overline{D_{J,s}(K,t)} = D_{\bar{J},-s}(\bar{K},-t)
\end{gather*}
We also introduce the convention that when the $t$ argument is omitted, it is
taken to be zero: $D_{J,s}(K) = D_{J,s}(K,0)$.

\begin{figure}
\psfrag{J,s}[cc][cc]{{$J,s$}} \psfrag{K,t}[cc][cc]{{$K,t$}}
\includegraphics{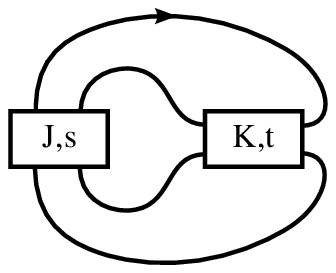}
\caption{The satellite knot $D_{J,s}(K,t)$.} \label{fig:djskt}
\end{figure}

The Bing double of $K$ may be defined as $BD(K) = I_{B_1,K,0}(B_2 \cup B_3)$;
we may also see this as a satellite operation where the pattern is a
two-component link.

\subsection{Heegaard Floer homology and the $\tau$ invariant}

In the 2000s, Ozsv\'ath and Szab\'o \cite{OSz3Manifold, OSz4Manifold}
introduced \emph{Heegaard Floer homology}, a package of invariants for $3$- and
$4$-dimensional manifolds that are conjecturally equivalent to earlier
gauge-theoretic invariants but whose construction is much more topological in
flavor. In its simplest form, given a \emph{Heegaard diagram} $\HH$ for a
$3$-manifold $Y$ (a certain combinatorial description of the manifold), the
theory assigns a chain complex $\CF(\HH)$ whose chain homotopy type is
independent of the choice of diagram; thus, the homology $\HF(Y) =
H_*(\CF(\HH))$ is an invariant of the $3$-manifold. A $4$-dimensional cobordism
between two $3$-manifolds induces a well-defined map between their Heegaard
Floer homology groups. Ozsv\'ath and Szab\'o \cite{OSzKnot} and Rasmussen
\cite{RasmussenThesis} also showed that a nulhomologous knot $K \subset Y$
induces a filtration on the chain complex of a suitably defined Heegaard
diagram, yielding an knot invariant $\HFK(Y,K)$ that is the $E^1$ page of a
spectral sequence converging to $\HF(Y)$. For knots in $S^3$, the invariant
$\HFK(S^3,K)$ categorifies the Alexander polynomial $\Delta_K$, and it is
powerful enough to detect the unknot \cite{OSzGenus} and whether or not $K$ is
fibered \cite{Ghiggini, NiFibered}.

Furthermore, the spectral sequence from $\HFK(S^3,K)$ to $\HF(S^3) \cong \Z$
provides an integer-valued concordance invariant $\tau(K)$, which yields a
lower bound on genus of smooth surfaces in the four-ball bounded by $K$:
$\abs{\tau(K)} \le g_4(K)$ \cite{OSz4Genus}. In particular, any smoothly slice
knot must have $\tau(K)=0$. Moreover, this genus bound applies not only for
surfaces in $B^4$ but for surfaces in any rational homology $4$-ball.

The $\tau$ invariant can be used to extend many of the earlier results
obstructing smooth sliceness. Hedden \cite{HeddenWhitehead} computed the value
of $\tau$ for all twisted Whitehead doubles in terms of $\tau$ of the original
knot:
\begin{equation} \label{eq:tau-Wh}
\tau(Wh_+(K,t)) = \begin{cases} 1 & t < 2 \tau(K) \\ 0 & t \ge 2 \tau(K).
\end{cases}
\end{equation}
(An analogous formula for negative Whitehead doubles follows from the fact that
$\tau(\bar K) = - \tau(K)$.) In particular, if $\tau(K)>0$, then
$\tau(Wh_+(K,0))=1$, so $Wh_+(K,0)$ (the untwisted Whitehead double of $K$) is
not smoothly slice. Since the $\tau$ invariant of a strongly quasipositive knot
is equal to its genus \cite{LivingstonComputations}, Rudolph's earlier result
follows from Hedden's.

The author's main theorem in \cite{LevineDoublingOperators} is a computation of
$\tau$ for all knots of the form $D_{J,s}(K,t)$:
\begin{thm} \label{thm:taudjskt}
Let $J$ and $K$ be knots, and let $s,t \in \Z$. Then
\[
\tau(D_{J,s}(K,t)) =
\begin{cases}
1 & s<2\tau(J) \text{ and } t<2 \tau(K) \\
-1 & s>2\tau(J) \text{ and } t>2 \tau(K) \\
0 & \text{otherwise}.
\end{cases}
\]
\end{thm}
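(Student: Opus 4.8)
The plan is to reduce the computation to Whitehead doubles via symmetry and a monotonicity argument, and then to carry out the remaining cases with a bordered-Floer bimodule computation. I begin with two structural reductions. Since any Seifert surface for the pattern $B_3$ inside the doubling region has genus one, $D_{J,s}(K,t)$ has Seifert genus at most one, so the slice-genus bound $\abs{\tau}\le g_4$ gives $\tau(D_{J,s}(K,t))\in\{-1,0,1\}$ for all $J,K,s,t$; this already supplies the correct range and reduces the theorem to locating the jumps. Next, the mirror symmetry $\overline{D_{J,s}(K,t)}=D_{\bar J,-s}(\bar K,-t)$ together with $\tau(\bar L)=-\tau(L)$ gives $\tau(D_{J,s}(K,t))=-\tau(D_{\bar J,-s}(\bar K,-t))$. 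As $2\tau(\bar J)=-2\tau(J)$ and $2\tau(\bar K)=-2\tau(K)$, the hypotheses $s>2\tau(J)$, $t>2\tau(K)$ become $-s<2\tau(\bar J)$, $-t<2\tau(\bar K)$, so the value $-1$ follows formally from the value $1$, and the ``otherwise'' region is visibly preserved. It therefore suffices to prove the $+1$ case and the $0$ case.

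I would then exploit the fact that increasing the framing $s$ by one adds a single positive full twist to the two oppositely-oriented strands of the infected cable passing through the disk bounded by $B_1$; for antiparallel strands this is a clasp, removable by one crossing change. Hence $D_{J,s}(K,t)$ and $D_{J,s+1}(K,t)$ differ by a single crossing change, and likewise in the $t$ variable, so $\tau(D_{J,s}(K,t))$ is nonincreasing in $s$ and in $t$ and changes by at most one at each step. Combined with the range $\{-1,0,1\}$, this shows $\tau$ is a monotone step function of each framing, and the entire content of the theorem is that, in each variable, the single downward jump occurs precisely as the framing passes $2\tau$ of the corresponding companion. When $J$ is the unknot, $D_{O,-1}(K,t)=Wh_+(K,t)$, so Hedden's formula \eqref{eq:tau-Wh} locates the jump at $2\tau(K)$, furnishing the base case and a consistency check.

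The heart of the argument, and the main obstacle, is to locate these jumps for arbitrary companions. For this I would turn to bordered Heegaard Floer homology: compute once and for all the doubly-bordered invariant of the doubling pattern --- the exterior of $B_1\cup B_2$ together with the filtered knot $B_3$ --- as an explicit type-$AA$ bimodule $\CFAA$, and then identify $\CFK(D_{J,s}(K,t))\simeq \CFAA\boxtimes\CFD\boxtimes\CFD$, where the two type-$D$ factors are the modules of the exteriors of $J$ and $K$ with framings $s$ and $t$. The invariant $\tau$ is then read off from the resulting filtered complex. A priori each $\CFD$ depends on the full filtered complex $\operatorname{CFK}^\infty$ of its companion, not merely on $\tau$; the crux is to show that, after the box tensor product, only the distinguished ``unstable chain'' of each companion module --- whose length relative to the framing is governed by $2\tau(J)$ and $2\tau(K)$ --- can carry the cycle detecting $\tau$, so that the remaining ``box'' summands encoding finer structure do not affect the answer. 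This is the two-variable generalization of the phenomenon underlying Hedden's Whitehead-double formula, and the subtle point is to verify that the two companion slots do not interact so as to reintroduce a dependence on finer invariants such as $\epsilon$. Once this vanishing is established, a direct evaluation of the tensor product in each of the three framing regimes yields the stated values $1$, $-1$, and $0$.
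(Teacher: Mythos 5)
You have correctly reconstructed the strategy, but you should know that this paper does not prove Theorem \ref{thm:taudjskt} at all: the theorem is quoted from the companion paper \cite{LevineDoublingOperators}, and the present paper only describes the method (``an involved computation using bordered Heegaard Floer homology'') before deferring there. Measured against that, your proposal takes essentially the same approach as the actual proof: the decomposition of the exterior of $D_{J,s}(K,t)$ into the exterior of $B_1 \cup B_2$ (carrying the filtration induced by $B_3$) and the two framed companion exteriors, the identification of $\CFK$ of the satellite as $\CFAA \boxtimes \CFD \boxtimes \CFD$, and the reliance on the Lipshitz--Ozsv\'ath--Thurston description of $\CFD$ of a framed knot complement, whose unstable chain is what makes $2\tau(J)$ and $2\tau(K)$ the relevant thresholds. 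Your preliminary reductions are also sound: $D_{J,s}(K,t)$ bounds the genus-one surface $A(J,s)*A(K,t)$, so $\tau \in \{-1,0,1\}$; the mirror identity $\overline{D_{J,s}(K,t)} = D_{\bar J,-s}(\bar K,-t)$ does reduce the $-1$ case to the $+1$ case; and $D_{O,-1}(K,t) = Wh_+(K,t)$ recovers Hedden's formula as a base case.

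The gap --- which you have honestly flagged yourself --- is that the entire content of the theorem lives in the step you do not carry out: exhibiting the filtered bimodule $\CFAA$ of the Borromean pattern piece and verifying that, after both box tensor products, the filtered homotopy type relevant to $\tau$ depends only on whether $s \lessgtr 2\tau(J)$ and $t \lessgtr 2\tau(K)$, and not on any finer structure of $\operatorname{CFK}^\infty$ of the companions (such as the invariant you call $\epsilon$). None of your soft arguments can substitute for this: the crossing-change monotonicity plus the Whitehead-double base case locates the jump in $t$ only when $J = O$, and says nothing about where the jump in $s$ occurs for a general companion $J$ --- which is precisely the question. In \cite{LevineDoublingOperators} this verification is a long explicit computation occupying the bulk of that paper. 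So your proposal should be read as a correct, well-organized plan that matches the architecture of the actual proof, rather than as a proof.
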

In particular, note that if $\tau(K)>0$ and $s<2\tau(J)$, or if $\tau(K)<0$ and
$s>2\tau(J)$, then $D_{J,s}(K,0)$ is topologically slice (as its Alexander
polynomial is $1$) but not smoothly slice in any rational homology $4$-ball.

The proof of Theorem \ref{thm:taudjskt} is an involved computation using the
theory of \emph{bordered Heegaard Floer homology}, developed recently by
Lipshitz, Ozsv\'ath, and Thurston \cite{LOTBordered, LOTBimodules}. Briefly,
the bordered theory associates to a $3$-manifold with boundary a module over an
algebra associated to the boundary, so that if $Y$ is obtained by gluing
together manifolds $Y_1$ and $Y_2$ along their common boundary, the chain
complex $\CF(Y)$ may be computed as the derived tensor product of the
invariants associated to $Y_1$ and $Y_2$. If a knot $K$ is contained in, say,
$Y_1$, then we may obtain the filtration on $\CF(Y)$ corresponding to $K$ via a
filtration on the algebraic invariant of $Y_1$. This technique is thus
well-suited to the problem of computing Heegaard Floer invariants for knots
obtained through infection operations. For more details, see
\cite{LevineDoublingOperators}.

\subsection{Covering link calculus}

The proof of Theorem \ref{thm:notslice} makes use of \emph{covering link
calculus}, first developed by Cochran and Orr \cite{CochranOrrBoundary} and
used more recently by Cha and Kim \cite{ChaKim} and others
\cite{CimasoniSlicing, ChaLivingstonRuberman, VanCott}. Let $R$ denote any of
the rings $\Z$, $\Q$, or $\Z_{(p)}$ (for $p$ prime). A link $L$ in an
$R$-homology $3$-sphere $Y$ is called \emph{topologically (resp.~smoothly)
$R$-slice} if there exists a topological (resp.~smooth) $4$-manifold $X$ such
that $\partial X = Y$, $H_*(X;R)=H_*(B^4;R)$, and $L$ bounds a locally flat
(resp.~smoothly embedded), disjoint union of disks in $X$. A link that is
$\Z$-slice (in either category) is $\Z_{(p)}$-slice for all $p$, and a link
that is $\Z_{(p)}$-slice for some $p$ is $\Q$-slice. Also, a link in $S^3$ that
is slice (in $B^4$) is clearly $\Z$-slice. The key result of Ozsv\'ath and
Szab\'o \cite{OSz4Genus} is that the $\tau$ invariant of any knot that is
smoothly $\Q$-slice is $0$.

Define two moves on links in $\Z_{(p)}$-homology spheres, called \emph{covering
moves}:
\begin{enumerate}
\item Given a link $L\subset Y$, consider a sublink $L' \subset L$.
\item Given a link $L \subset Y$, choose a component $K$ with trivial
self-linking. For any $a \in \N$, the $p^a$-fold cyclic branched cover of $Y$
branched over $K$, denoted $\tilde Y$, is a $\Z_{(p)}$-homology sphere, and we
consider the preimage $L'$ of $L$ in $\tilde Y$.
\end{enumerate}
We say that $L' \subset Y'$ is a \emph{$p$-covering link} of $L \subset Y$ if
$L'$ can be obtained from $Y'$ using these moves.

The key fact is the following:
\begin{prop}
If $L$ is (topologically or smoothly) $\Z_{(p)}$-slice, then any $p$-covering
link of $L$ is also (topologically or smoothly) $\Z_{(p)}$-slice.
\end{prop}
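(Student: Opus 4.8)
The plan is to treat the two covering moves separately, since any $p$-covering link arises from a finite sequence of such moves, and it suffices to show that each individual move preserves $\Z_{(p)}$-sliceness in the given category. Move (1) is immediate: if $L$ bounds a disjoint union of properly embedded disks $\Delta_1 \cup \cdots \cup \Delta_n$ in a $\Z_{(p)}$-homology ball $X$ with $\partial X = Y$, then any sublink $L' \subset L$ bounds the corresponding subcollection of those same disks in the same $X$, so $L'$ is $\Z_{(p)}$-slice. The substance of the proof lies entirely in move (2).

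Suppose, then, that $L \subset Y$ is $\Z_{(p)}$-slice, witnessed by a $4$-manifold $X$ with $\partial X = Y$, $H_*(X;\Z_{(p)}) \cong H_*(B^4;\Z_{(p)})$, and disjoint properly embedded disks $\Delta_1 \cup \cdots \cup \Delta_n$ with $\partial \Delta_i = K_i$. Let $K = K_j$ be the chosen component with trivial self-linking, bounding the disk $\Delta_K = \Delta_j$. First I would construct the $p^a$-fold cyclic cover of $X$ branched over $\Delta_K$. This requires a surjection $H_1(X \smallsetminus \Delta_K; \Z) \to \Z/p^a$ sending the meridian $\mu$ of $\Delta_K$ to a generator. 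A Mayer--Vietoris computation, decomposing $X$ as the complement $X_0 = X \smallsetminus \nu(\Delta_K)$ glued to a tubular neighborhood $\nu(\Delta_K) \cong D^2 \times D^2$ along the circle bundle $S^1 \times \Delta_K \simeq S^1$, shows that $H_1(X_0; \Z_{(p)}) \cong \Z_{(p)}$ is generated by $\mu$ (using $H_1(X;\Z_{(p)}) = H_2(X;\Z_{(p)}) = 0$). This yields the desired cover $\tilde X$, whose branch locus is a single disk $\tilde\Delta_K$ mapping homeomorphically to $\Delta_K$, and whose boundary $\partial \tilde X = \tilde Y$ is precisely the $p^a$-fold branched cover of $Y$ over $K$ appearing in the definition of the covering move.

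The key step is to verify that $\tilde X$ is again a $\Z_{(p)}$-homology ball, and this is where the hypotheses genuinely enter: the restriction to a $p^a$-fold (prime-power) cover, the use of $\Z_{(p)}$ coefficients, and the triviality of the self-linking of $K$ all conspire to make the homological computation succeed. I would compute $H_*(\tilde X; \Z_{(p)})$ either through the transfer homomorphism together with Smith theory for the $\Z/p^a$ action (building the cover as a tower of $\Z/p$ covers so that the fixed-point estimates apply cleanly), or by a direct Mayer--Vietoris argument decomposing $\tilde X$ as the unbranched cyclic cover of $X_0$ glued to a neighborhood of $\tilde\Delta_K$. The vanishing self-linking of $K$ guarantees that the relevant linking form contributes no $\Z_{(p)}$-torsion, forcing $H_*(\tilde X; \Z_{(p)}) \cong H_*(B^4; \Z_{(p)})$. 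I expect this homological computation to be the main obstacle, since it is the one point at which the precise hypotheses cannot be relaxed.

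It remains to produce the slice disks for the preimage $L'$ of $L$ in $\tilde Y$. For each $i \neq j$, the disk $\Delta_i$ lifts to its full preimage in $\tilde X$, a disjoint union of properly embedded disks bounding the preimage of $K_i$, while $K = K_j$ bounds the branch disk $\tilde\Delta_K$ itself. These disks are pairwise disjoint, and they are smoothly (resp.~locally flatly) embedded according to the category of the original $\Delta_i$, since cyclic branched covers over a smooth (resp.~locally flat) codimension-two submanifold inherit that regularity. Together they exhibit $L'$ as $\Z_{(p)}$-slice in $\tilde X$, completing move (2) and hence the proof.
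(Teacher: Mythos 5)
Your proof is correct and takes essentially the same approach as the paper: the paper also handles move (2) by taking the branched cover of $X$ along the slice disk for $K$ and appealing to the well-known fact (cited there to Kauffman's \emph{On Knots}) that the resulting $4$-manifold is a $\Z_{(p)}$-homology ball, with the lifted disks then slicing the preimage link. The only difference is that you sketch the homological argument (Mayer--Vietoris plus transfer/Smith theory) that the paper dispatches with a citation.
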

To prove that the second covering move preserves $\Z_{(p)}$-sliceness, we take
the branched cover of the $X$ over the slice disk for $K$; the resulting
$4$-manifold is a $\Z_{(p)}$-homology $4$-ball by a well-known argument (see,
e.g., \cite[page 346]{KauffmanOnKnots}). Thus, a strategy for showing a link
$L$ is not slice is to find a knot that is a covering link of $L$ and has a
non-vanishing $\Q$-sliceness obstruction, such as $\tau$.

Note that if $L$ is a link in $S^3$ whose components are unknotted, then the
branched cover branched over one component is again $S^3$. The putative
$4$-manifold containing a slice disk, however, may change.

Henceforth, we restrict to the case where $p=2$ and omit further reference to
$p$.

\section{Proof of Theorem \ref{thm:notslice}} \label{sec:covering}

The strategy for proving the first part of Theorem \ref{thm:notslice} is to
obtain a knot $K'$ of the form
\[
K' = D_{J_1,s_1} \circ \dots \circ D_{J_n,s_n}(K),
\]
where $s_i < 2\tau(J_i)$ for each $i$, as a covering link of $Wh_+(B_T(K))$. If
$\tau(K)>0$, induction using Theorem \ref{thm:taudjskt} (which we prove below)
shows that $\tau(K')=1$, so $K'$ cannot be rationally smoothly slice, so
$Wh_+(D_T(K))$ cannot be smoothly slice. A similar argument works for the
second part of the theorem.

The following lemmas are inspired by Van Cott's work on the sliceness of
iterated Bing doubles \cite{VanCott}:

\begin{lemma} \label{lemma:solidtorus}
Let $L$ be a link in $S^3$, and suppose there is an unknotted solid torus $U
\subset S^3$ such that $L \cap U$ consists of two components $K_1,K_2$ embedded
as follows: if $A_1, A_2$ are the components of the untwisted Bing double of
the core $C$ of $U$, then $K_1 = D_{P_k,s_k} \circ \dots \circ
D_{P_1,s_1}(A_1)$ and $K_2 = D_{Q_l,t_l} \circ \dots \circ D_{Q_1,t_1}(A_2)$,
for some knots $P_1,\dots,P_k, Q_1,\dots,Q_l$ and integers $s_1, \dots, s_k,
t_1, \dots, t_l$. Let $L'$ be the link obtained from $L$ by replacing $K_1$ and
$K_2$ by the satellite knot
\begin{equation} \label{eq:solidtorus1}
C' = D_{P_k,s_k} \circ \dots \circ D_{P_1,s_1} \circ D_{R,u} (C)
\end{equation}
of $C$, where
\begin{equation} \label{eq:solidtorus2}
(R,u) = \begin{cases}
 (Q_1 \# Q_1^r, \ 2t_1) & l=1 \\
 (D_{Q_1,t_1} \circ \dots \circ D_{Q_{l-2},t_{l-2}} (D_{Q_{l-1},t_{l-1}}(Q_l \#
Q_l^r, 2t_l)), \ 0) & l>1.
\end{cases}
\end{equation}
Then $L'$ is a covering link of $L$.
\end{lemma}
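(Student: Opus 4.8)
The plan is to realize $L'$ from $L$ by a single application of the second covering move --- a double branched cover over a carefully chosen unknotted axis $\eta$ --- followed by a sublink move to discard redundant preimages. The first task is to locate $\eta$. Because $K_1, K_2$ lie in $U$ as doubling operators applied to the components $A_1, A_2$ of the Bing double of the core $C$, they clasp each other once inside $U$; I would take $\eta$ to be the unknotted circle threading this Bing clasp, chosen so that it bounds an embedded disk $D$ meeting $K_1 \cup K_2$ only in the clasp region and disjoint from $L \minus (K_1 \cup K_2)$. Since $\eta$ is unknotted, the double branched cover $\Sigma_2(S^3,\eta)$ is again $S^3$, and $\eta$ has trivial self-linking, so covering move (2) applies and produces a link in $S^3$ of which $L'$ will be a sublink.

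Next I would analyze the local picture in the branched cover, first in the model case $K_1 = A_1$, $K_2 = A_2$ with no doubling operators. The key geometric claim is that, with $\eta$ positioned as above, the deck transformation carries $C$ to a single lifted copy $\tilde C$, while the clasp between $A_1$ and $A_2$ unwraps so that the preimage of $A_1 \cup A_2$, after discarding one of two symmetric copies via a sublink move, is a single knot isotopic to $\tilde C$ carrying exactly one infection arising from the clasp. Tracking orientations across the branch disk, this infection is by the connect sum $R = Q \# Q^r$ of the innermost $Q$ with its reverse, at doubled framing $u = 2t$ --- the standard effect of a double branched cover on a curve meeting the branch disk (compare Van Cott \cite{VanCott} and Cha--Kim \cite{ChaKim}). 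This produces the $D_{R,u}(C)$ factor in \eqref{eq:solidtorus1}.

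With the model case in hand, I would track the full doubling operators through the same cover. The operators $D_{P_1,s_1}, \dots, D_{P_k,s_k}$ on $K_1$ are supported in a neighborhood of $A_1$ with zero linking with $\eta$ and disjoint from $D$; hence each lifts unchanged and, after the sublink move, survives verbatim as the outer operators $D_{P_k,s_k}\circ\cdots\circ D_{P_1,s_1}$ on $\tilde C = C$. The operators $D_{Q_1,t_1},\dots,D_{Q_l,t_l}$ on $K_2$ are the ones folded across the branch disk: the innermost pair becomes $(Q_l \# Q_l^r, 2t_l)$ as above, while each outer $Q_i$ lifts to an ordinary infection at framing $0$, the twisting having already been absorbed into the doubling of the innermost framing. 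Reading off the resulting nested satellite yields precisely the two cases of \eqref{eq:solidtorus2}: for $l=1$ one gets $(R,u) = (Q_1 \# Q_1^r, 2t_1)$ directly, and for $l>1$ one gets $(D_{Q_1,t_1} \circ \dots \circ D_{Q_{l-2},t_{l-2}} (D_{Q_{l-1},t_{l-1}}(Q_l \# Q_l^r, 2t_l)), \ 0)$.

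Finally I would check that $L \minus (K_1 \cup K_2)$ is unaffected: these components lie outside $U$, are disjoint from $D$, and have zero linking with $\eta$, so each lifts to two parallel copies; a last sublink move retains one copy of each, identified with the original, giving exactly $L'$. The main obstacle I anticipate is the careful geometric bookkeeping in the branched cover --- verifying that a single $\eta$ can simultaneously unwrap the clasp into one knot, leave the $P$-operators rigid, and symmetrize the innermost $Q$-infection into $Q_l \# Q_l^r$ at doubled framing \emph{with the correct sign}. The orientation conventions recorded for the infection boxes, together with the Borromean symmetry $D_{J,s}(K,t)^r = D_{J^r,s}(K,t)$, should be exactly what is needed to confirm that $Q_l$ connect-sums with its \emph{reverse} and that the two framings add rather than cancel.
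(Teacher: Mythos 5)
Your proposal hinges on an operation that is not actually available in the covering link calculus: you take the double branched cover over an auxiliary unknotted circle $\eta$ that threads the Bing clasp but is \emph{not a component of $L$}. Covering move (2) requires the branch locus to be a component of the link itself. This is not a technicality --- it is exactly what makes the calculus preserve sliceness: if $L$ bounds disjoint (locally flat or smooth) disks in a homology ball $X$, one branches $X$ over the slice disk of that component, and the preimages of the remaining disks exhibit the covering link as slice. Your circle $\eta$ bounds no slice disk in $X$ (indeed its disk in $S^3$ meets $L$ in the clasp region, so any surface it bounds in $X$ has uncontrolled intersections with the slice disks), and hence branching over $\eta$ gives no conclusion whatsoever about sliceness of the resulting link. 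The argument therefore fails at its first and central step, independently of the subsequent bookkeeping.

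The idea you are missing is the observation that $K_2$ itself is unknotted: since $D_{J,s}(O,0)=O$, any iterated doubling operator applied to the Bing double component $A_2$ (an unknot) is an unknot. The paper's proof branches over the component $K_2$, so the cover is again $S^3$ and the move is legitimate. After isotoping $K_2$ into a standard position (pulling the companion strands of $K_1$ through the $Q_i$-infection regions one at a time), the preimage of $K_1$ in the double cover consists of two components, each still carrying the operators $D_{P_k,s_k}\circ\cdots\circ D_{P_1,s_1}$, whose companion curve now wraps through the lifted infection regions; taking a sublink consisting of one lift identifies the companion as $D_{Q_1\# Q_1^r,\,2t_1}(C)$ when $l=1$, or $D_{R,0}(C)$ with $R = D_{Q_1,t_1}\circ\cdots\circ D_{Q_{l-2},t_{l-2}}(D_{Q_{l-1},t_{l-1}}(Q_l\# Q_l^r,2t_l))$ when $l>1$. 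Note also that in your final paragraph you assert the outer $Q_i$ operators lift ``at framing $0$,'' which contradicts the formula \eqref{eq:solidtorus2} you then claim to recover: the framings $t_1,\dots,t_{l-1}$ persist inside $R$; it is only the outermost insertion of $R$ into $C$ that occurs with framing $u=0$.
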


\psfrag{(a)}{(a)} \psfrag{(b)}{(b)} \psfrag{(c)}{(c)} \psfrag{(d)}{(d)}
\psfrag{K1}{{\scriptsize $K_1$}} \psfrag{K2}{{\scriptsize \color{red} $K_2$}}
\psfrag{T}{{\scriptsize $T$}} \psfrag{Tb}{{\scriptsize \color{blue} $T$}}

\psfrag{Q0t0}{{\scriptsize $Q_0,t_0$}} \psfrag{Q1t1}{{\scriptsize $Q_1,t_1$}}
\psfrag{Q2t2}{{\scriptsize $Q_2,t_2$}} \psfrag{Q3t3}{{\scriptsize $Q_3,t_3$}}
\psfrag{Qltl}{{\scriptsize $Q_l,t_l$}} \psfrag{QlQl2tl}{{\scriptsize
$Q_l\#Q_l^r,2t_l$}} \psfrag{Q1Q12t1}{{\scriptsize $Q_1\#Q_1^r,2t_1$}}

\psfrag{Ql-1tl-1}[cc][cc]{{\scriptsize \shortstack[c]{$Q_{l-1},$ \\
$t_{l-1}$}}}

\psfrag{DPkskDP1s1}{{\scriptsize $[D_{P_k,s_k} \cdots D_{P_1,s_1}]$}}
\psfrag{DPkskDP1s1b}{{\scriptsize \color{blue} $[D_{P_k,s_k} \cdots
D_{P_1,s_1}]$}} \psfrag{DQltlDQ0t0}{{\scriptsize \color{red} $[D_{Q_l,t_l}
\cdots D_{Q_0,t_0}]$}}

\psfrag{DQltlDQ1t1}[tl][tl]{{\scriptsize \color{red} \shortstack[l]{$[D_{Q_l,t_l} \cdots $ \\
\quad $D_{Q_1,t_1}]$ }}}
\psfrag{DQltlDQ2t2}[tl][tl]{{\scriptsize \color{red} \shortstack[l]{$[D_{Q_l,t_l} \cdots $ \\
\quad $D_{Q_2,t_2}]$ }}}
\psfrag{DQltlDQ3t3}[tl][tl]{{\scriptsize \color{red} \shortstack[l]{$[D_{Q_l,t_l} \cdots $ \\
\quad $D_{Q_3,t_3}]$ }}}

\psfrag{DQltlDQ3t3oneline}{{\scriptsize \color{red} $[D_{Q_l,t_l} \cdots
D_{Q_3,t_3}]$}}

\psfrag{2l}{{\scriptsize $2^l$}} \psfrag{2l-1}{{\scriptsize $2^{l-1}$}}
\psfrag{2l-2}{{\scriptsize $2^{l-2}$}} \psfrag{2l-1-2}{{\scriptsize
$2^{l-1}-2$}}

\begin{figure} \centering
\includegraphics[scale=0.8]{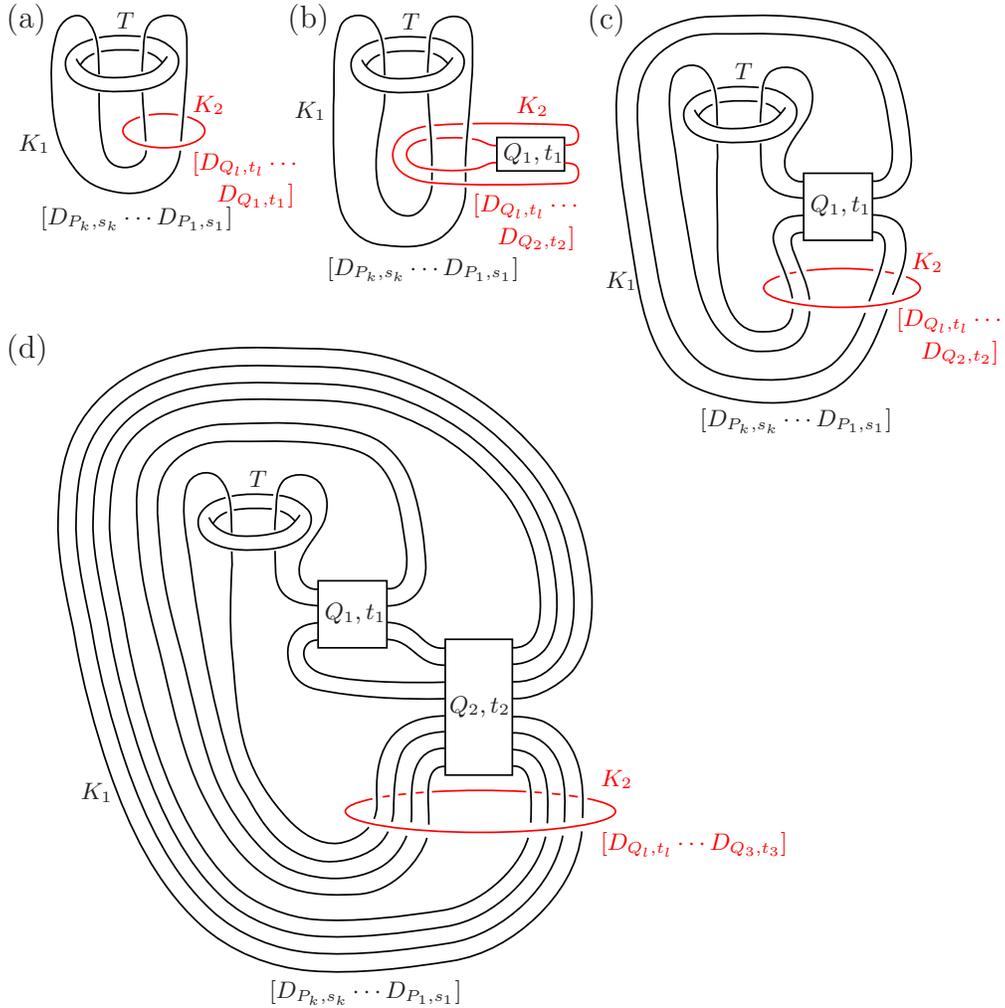}
\caption [The link described in Lemma \ref{lemma:solidtorus}.] {The link
described in Lemma \ref{lemma:solidtorus}. All but the two components shown are
contained in the interior of the solid torus $T$. We denote a satellite knot by
writing the pattern in brackets near the companion curve; thus, for instance,
$K_1=D_{P_k,s_k} \circ \cdots \circ D_{P_1,s_1}(A_1)$, where $A_1$ is the curve
shown.} \label{fig:solidtorus1}
\end{figure}

\begin{figure} \centering
\includegraphics[scale=0.8]{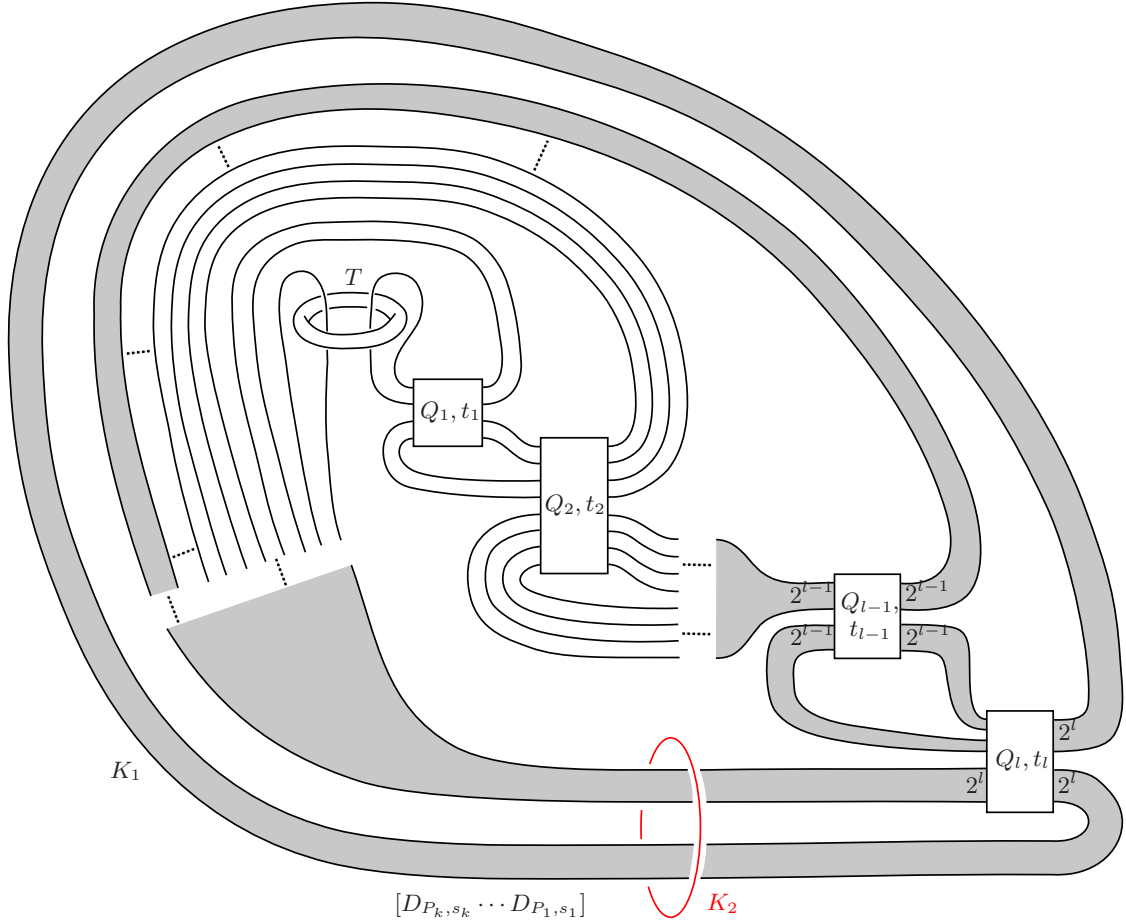}
\caption [The link described in Lemma \ref{lemma:solidtorus}, after isotopies.]
{The link described in Lemma \ref{lemma:solidtorus}, after isotopies. A shaded
region with a number represents that many parallel strands.}
\label{fig:solidtorus2}
\end{figure}

\begin{figure} \centering
\centering
\includegraphics[scale=0.79]{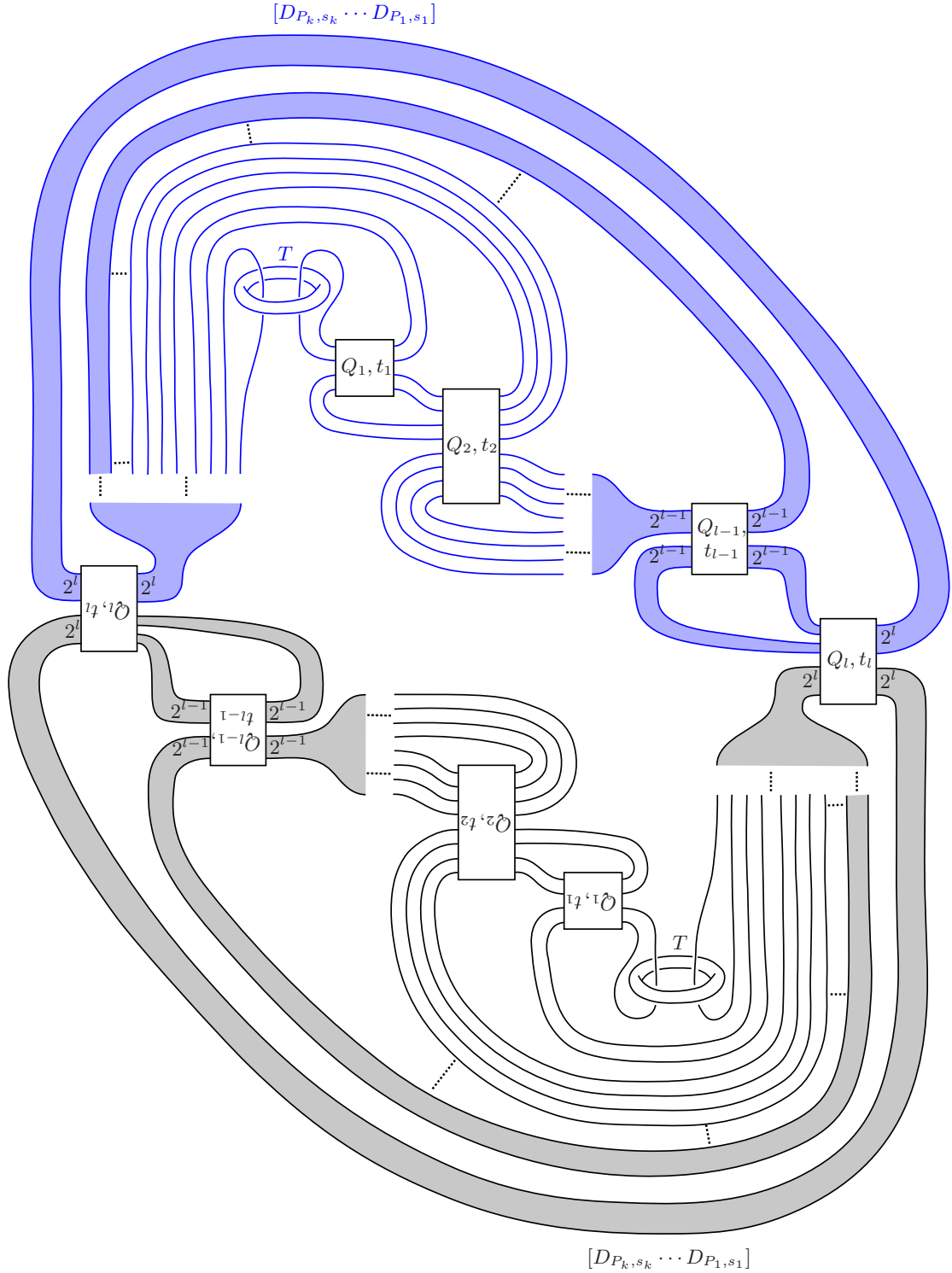}
\caption{The preimage of the link in Figure \ref{fig:solidtorus2} in the
double-branched cover of $S^3$ over $K_2$ (shown without the upstairs branch
set).} \label{fig:solidtorus3}
\end{figure}

\begin{figure} \centering
\includegraphics[scale=0.8]{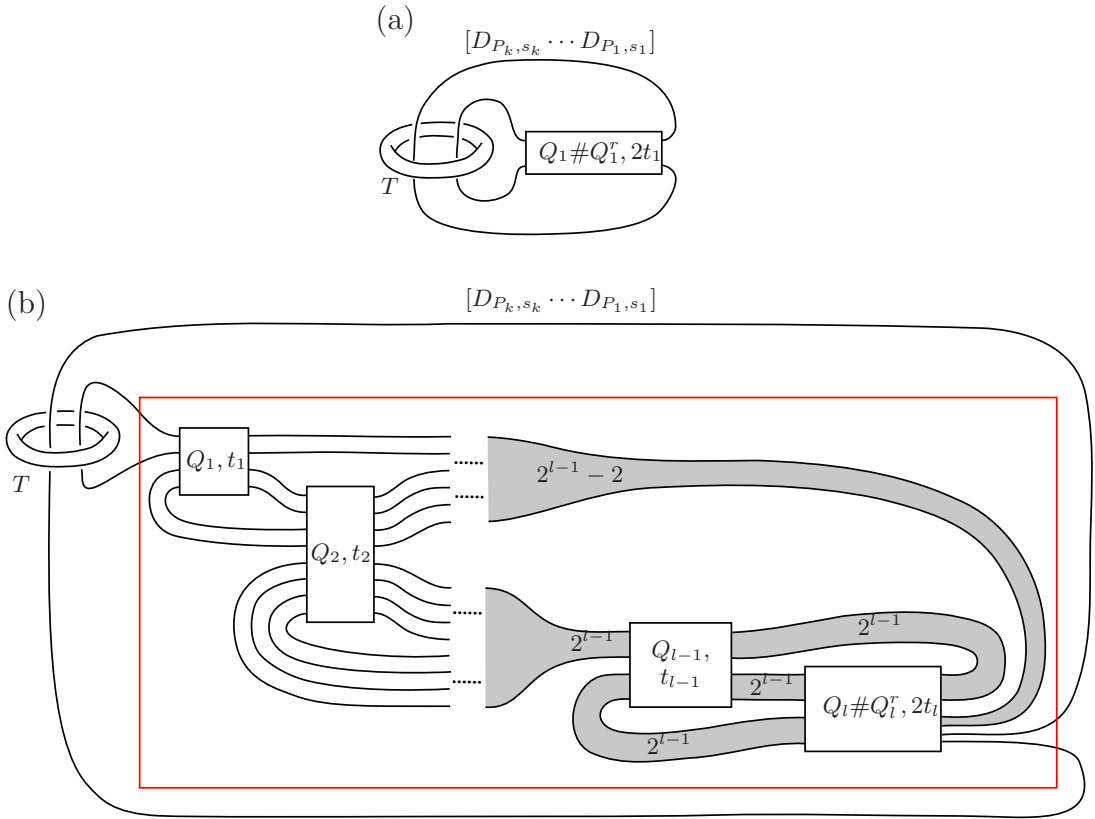}
\caption [The sublink shown in blue in Figure \ref{fig:solidtorus3}.] {The
sublink shown in blue in Figure \ref{fig:solidtorus3} is the $D_{P_k,s_k} \circ
\dots \circ D_{P_1,s_1}$ satellite of: (a) when $l=1$,
$D_{Q_1\#Q_1^r,2t_1}(C)$; (b) when $l>1$, $D_{R,0}(C)$, where $R$ is the knot
in Figure \ref{fig:solidtorus5}.} \label{fig:solidtorus4}
\end{figure}

\psfrag{DQltlDQ1t1}{{\scriptsize \color{red} $[D_{Q_l,t_l} \cdots
D_{Q_1,t_1}]$}}

\begin{figure} \centering
\includegraphics[scale=0.8]{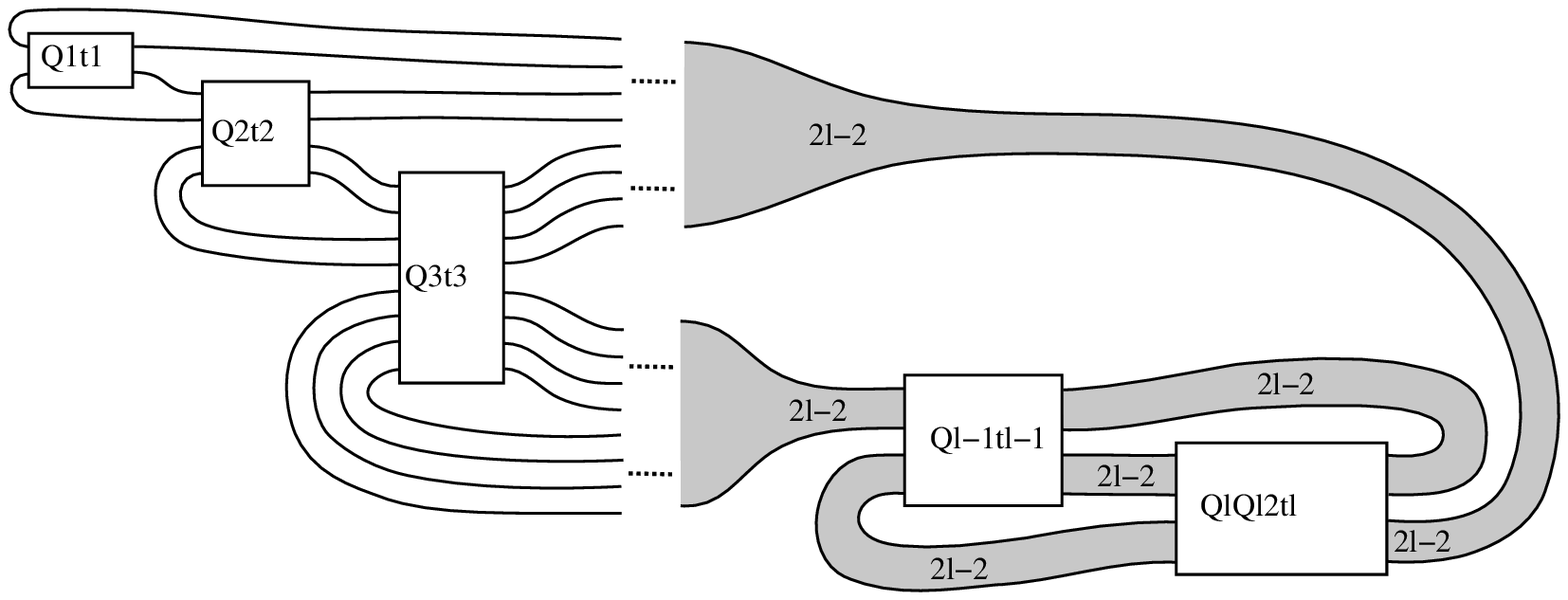}
\caption{The knot $R$ in the proof of Lemma \ref{lemma:solidtorus}.}
\label{fig:solidtorus5}
\end{figure}

\begin{proof}
Let $T = S^3 \minus U$; then $L \minus (K_1 \cup K_2)$ is contained in $T$.
Note that $K_1$ and $K_2$ are each unknotted, since $D_{J,s}(O,0) = O$ for any
$J,s$. We may untangle $K_2$ as in Figures
\ref{fig:solidtorus1}--\ref{fig:solidtorus2}. Specifically, $L$ is shown in
Figures \ref{fig:solidtorus1}(a) and (b). To obtain Figure
\ref{fig:solidtorus1}(c), we pull the two strands of the companion curve for
$K_1$ through the infection region marked $Q_1,t_1$, and then untangle the
companion curve for $K_2$. We then repeat this procedure to obtain Figure
\ref{fig:solidtorus1}(d), and $l-2$ more times to obtain Figure
\ref{fig:solidtorus2}.

The branched double cover of $S^3$ branched along $K_2$ is again $S^3$;
consider the preimage of $K_1 \cup (L \cap T)$, shown in Figure
\ref{fig:solidtorus3}. (The knot orientation conventions for infections are
important here, since the knots $Q_i$ need not be reversible.) Since $T$ is
contained in a ball disjoint from $K_1$, the sublink $L \cap T$ lifts to two
identical copies, each contained in a solid torus. The preimage of $K_2$ also
consists of two components, and each is the $D_{P_k,s_k} \circ \cdots \circ
D_{P_1,s_1}$ satellite of the companion curve shown. A sublink consisting of
one lift of each component (either the blue or the black part of Figure
\ref{fig:solidtorus3}) is redrawn in Figure \ref{fig:solidtorus4}(a) in the
case where $l=1$ and in Figure \ref{fig:solidtorus4}(b) in the case where
$l>1$. In the former case, the companion curve shown is $D_{Q_l \# Q_l^r,
2t_l}(C)$, where $C$ is the core circle of the complement of $T$. In the latter
case, it is $D_{R,0}(C)$, where we obtain $R$ by connecting the ends of one of
the two parallel strands that pass through the red box in Figure
\ref{fig:solidtorus4}(b). (A local computation shows that the linking number of
these two strands is zero, so $D_{R,0}$ is the correct operator.) The knot $R$,
shown in Figure \ref{fig:solidtorus5}, is then identified as
\[
D_{Q_1,t_1} \circ \dots \circ D_{Q_{l-2},t_{l-2}} (D_{Q_{l-1},t_{l-1}}(Q_l \#
Q_l^r, 2t_l)). \qedhere
\]
\end{proof}

\begin{lemma} \label{lemma:bing}
Let $C$ be a knot, let $U$ be a regular neighborhood of $C$, and let $A_1, A_2
\subset U$ be the components of $BD(C)$. Let $K_1 = D_{P_k,s_k} \circ \dots
\circ D_{P_1,s_1}(A_1)$ and $K_2 = D_{Q_l,t_l} \circ \dots \circ
D_{Q_1,t_1}(A_2)$, for some knots $P_1,\dots,P_k, Q_1,\dots,Q_l$ and integers
$s_1, \dots, s_k, t_1, \dots, t_l$. Let $C'$ be the knot defined by
\eqref{eq:solidtorus1} and \eqref{eq:solidtorus2}. Then $C'$ is a covering link
of $K_1 \cup K_2$.
\end{lemma}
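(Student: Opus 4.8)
The plan is to adapt the proof of Lemma~\ref{lemma:solidtorus}, viewing Lemma~\ref{lemma:bing} as the special case $L = K_1 \cup K_2$ (so that $L \cap T = \emptyset$) in which the core $C$ is now allowed to be knotted. The covering moves used there --- the isotopies that untangle $K_2$, the double branched cover of $S^3$ over $K_2$, and the passage to a sublink consisting of one lift of $K_1$ --- are all supported in a neighborhood of the solid torus $U$, so one expects them to be insensitive to how $U$ is embedded in $S^3$; the knotting of $C$ should simply reappear in the companion curve of the resulting satellite.

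First I would check that $K_1$ and $K_2$ are unknotted in $S^3$ for an arbitrary knot $C$. Each $A_i$ is the image of $B_2$ or $B_3$ under the infection $BD(C) = I_{B_1,C,0}(B_2 \cup B_3)$; since $B_2$ and $B_3$ are each unlinked from the infection curve $B_1$, each lies in a ball disjoint from $N(B_1)$, is unaffected by the infection, and remains unknotted. As $D_{J,s}(O,0) = O$, it follows that $K_1$ and $K_2$ are both unknotted. In particular the double branched cover of $S^3$ over $K_2$ is again $S^3$, so the second covering move is available exactly as before. I would then run the same isotopies and the same branched cover, noting that $\lk(K_1,K_2)=0$ so that $K_1$ lifts to two components, and pass to the sublink given by one lift of $K_1$; by the diagrammatic identification in Figures~\ref{fig:solidtorus3}--\ref{fig:solidtorus5}, this lift is the $D_{P_k,s_k} \circ \cdots \circ D_{P_1,s_1}$ satellite of a companion curve, and it remains only to identify that companion curve as $D_{R,u}(C)$ with $(R,u)$ given by \eqref{eq:solidtorus2}.

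To handle the knotted core I would localize its knotting: isotope $C$ so that all of its knotting sits inside a ball $B_0$ meeting $U$ in a trivial tangle, through which the strands of $K_1 \cup K_2$ run as parallel push-offs of $C$ with no clasps or doublings. Outside $B_0$ the entire configuration, together with all the covering moves, is identical to the unknotted situation of Lemma~\ref{lemma:solidtorus}, so the lift carries precisely the pattern $D_{R,u}$ there. Inside $B_0$ the branch set $K_2$ is a trivial tangle and $C$ has winding number zero in $U$, so $\lk(C,K_2)=0$ and $C$ lifts to two components, each mapping homeomorphically to $C$; thus each lift picks up exactly one faithful copy of the knotting of $C$. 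Reassembling the two regions, the companion curve is the satellite $D_{R,u}$ of the knotted $C$, which is exactly $C'$, and hence $C'$ is a covering link of $K_1 \cup K_2$.

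The main obstacle is precisely this last identification. Because the double branched cover over $K_2$ is a global operation, one must verify carefully that it reproduces a \emph{single} faithful copy of $C$ in each lift --- so that the companion curve is $D_{R,u}(C)$ rather than a knot assembled from two copies of $C$, such as $C \# C^r$. The condition $\lk(C,K_2)=0$ is what guarantees this, and it is the one genuinely new point beyond Lemma~\ref{lemma:solidtorus}; everything else is a repetition of that proof with the knotting $C \cap B_0$ carried along for the ride.
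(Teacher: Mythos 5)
Your overall plan --- reduce to Lemma~\ref{lemma:solidtorus}, treat the knotting of $C$ as a tangle carried through the same isotopies and the same branched cover --- is exactly the route the paper takes (its parenthetical version replaces the complementary solid tori by boxes marked $C,0$), and your verification that $K_1$ and $K_2$ are unknotted is correct. But the step you yourself single out as the genuinely new point is the step your argument gets wrong, in two ways. First, the claim that ``inside $B_0$ the branch set $K_2$ is a trivial tangle'' is false: by your own setup the strands of $K_2$ are among the parallel push-offs of $C$ running through $B_0$, so $K_2\cap B_0$ is a family of parallel copies of a \emph{knotted} arc. Consequently the preimage of $B_0$ in the double branched cover is a \emph{connected} branched cover of the ball (a meridian of any one of those strands has nontrivial monodromy and connects the two sheets), not two disjoint copies of $B_0$; so the inside/outside-$B_0$ decomposition does not localize the covering construction the way you assert, and ``each lift picks up one copy of $B_0$'' has no meaning. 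Second, the inference ``$\lk(C,K_2)=0$, so $C$ lifts to two circles each mapping homeomorphically to $C$, hence each lift is a faithful copy of $C$'' is a non sequitur: a lift mapping homeomorphically onto a knot says nothing about its isotopy type upstairs. Indeed $\lk(K_1,K_2)=0$ as well, so each lift of $K_1$ also maps homeomorphically onto $K_1$, and yet the entire content of the lemma is that this lift is the nontrivial knot $C'$ rather than a copy of the unknot $K_1$. Changing knot type under lifting is the whole point of covering calculus, so $\lk(C,K_2)=0$ cannot by itself ``guarantee'' faithfulness.

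The paper localizes along $\partial U$ rather than along $\partial B_0$, and that is what makes the argument work. The covering monodromy vanishes on $H_1(S^3\minus U)$, because that group is generated by the meridian of $C$ and the winding number of $K_2$ in $U$ is zero (a condition on loops in the exterior --- note this is not the same condition as $\lk(C,K_2)=0$, which is the monodromy of a different class). Hence the preimage of the knot exterior $S^3\minus U$ is two homeomorphic copies of it, glued into the branched cover of $(U,K_2)$ along the two lifted tori by the same identification as downstairs (Seifert-framed longitude to meridian and vice versa), and the total space is still $S^3$ since $K_2$ is unknotted. One must then check that the complement of one lifted copy of $S^3\minus U$ is a solid torus which is a \emph{standard} tubular neighborhood of a curve isotopic to $C$, with the same pattern inside as in the unknotted case; this uses that a knot exterior in $S^3$ has solid-torus complement and that only the meridian filling of a knot exterior yields $S^3$, or else the paper's explicit alternative: replace each solid torus in Figures~\ref{fig:solidtorus3}--\ref{fig:solidtorus5} by a box marked $C,0$ and read off from the figures that the two lifted infection curves are $0$-framed and unlinked, with one lift of $K_1$ encircled by only one of them. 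That identification is what pins the companion down as a single copy of $C$ with the correct framing, and it is missing from your write-up. (A small additional slip: $C$ is the core of $U$, so it has winding number one; what you mean is that $K_2$ has winding number zero in $U$.)
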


\begin{proof}
The proof is almost identical to that of Lemma \ref{lemma:solidtorus}. The only
difference is that $S^3 \minus U$ is now a knot complement rather than a solid
torus containing some additional link components. The double branched cover
over $K_2$ contains consists of the complement of the two solid tori shown in
Figure \ref{fig:solidtorus3}, glued to two copies of $S^3 \minus U$, gluing
Seifert-framed longitude to meridian and vice versa. The resulting manifold is
again $S^3$, however. The rest of the proof proceeds \emph{mutatis mutandis}.
(Alternately, we may simply replace each of the solid tori in Figures
\ref{fig:solidtorus1}--\ref{fig:solidtorus5} by a box marked $C,0$, and proceed
as before.)
\end{proof}

A \emph{labeled binary tree} is a binary tree with each leaf labeled with a
satellite operation. Given a knot $K$ and binary tree $\TT$ with underlying
tree $T$, let $S_\TT(K)$ be the link obtained from $B_T(K)$ by replacing each
component with the satellite specified by the label of the corresponding leaf.
If $\TT$ has two adjacent leaves labeled $D_{P_k,s_k} \circ \dots \circ
D_{P_1,s_1}$ and $D_{Q_l,t_l} \circ \dots \circ D_{Q_1,t_1}$, form a new
labeled tree $\TT'$ by deleting these two leaves and labeling the new leaf
either $D_{P_k,s_k} \circ \dots \circ D_{P_1,s_1} \circ D_{Q_1 \# Q_1^r, 2t_1}
(C)$ or $D_{P_k,s_k} \circ \dots \circ D_{P_1,s_1} \circ D_{R,0}$, according to
whether $l=1$ or $l>1$, respectively, where, $R = D_{Q_1,t_1} \circ \dots \circ
D_{Q_{l-2},t_{l-2}} (D_{Q_{l-1},t_{l-1}}(Q_l \# Q_l^r, 2t_1))$ in the latter
case. We call this move a \emph{collapse}. Lemmas \ref{lemma:solidtorus} and
\ref{lemma:bing} then say that $S_{\TT'}(K)$ is a covering link of $S_\TT(K)$.

Theorem \ref{thm:taudjskt} and equations \eqref{eq:solidtorus1} and
\eqref{eq:solidtorus2}, along with the additivity of $\tau$ under connect sum,
imply:

\begin{prop} \label{prop:collapse}
Suppose $\TT'$ is obtained from $\TT$ by collapsing leaves labeled $D_{P_k,s_k}
\circ \dots \circ D_{P_1,s_1}$ and $D_{Q_l,t_l} \circ \dots \circ D_{Q_1,t_1}$,
where $s_i < 2\tau(P_i)$ and $t_i < 2\tau(Q_i)$ for all $i$. Then the label of
the new leaf of $\TT'$ has the form $D_{R_{k+1},u_{k+1}} \circ \cdots \circ
D_{R_1,u_1}$, where $u_i < 2 \tau(R_i)$.  \qed
\end{prop}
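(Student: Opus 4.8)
The plan is to read the conclusion straight off the definition of the collapse move and then reduce everything to a single inequality. By construction the new leaf carries the label
$D_{P_k,s_k} \circ \dots \circ D_{P_1,s_1} \circ D_{R,u}$,
which is already a composition of $k+1$ doubling operators, with $(R,u)$ given by \eqref{eq:solidtorus2}. Matching this against the asserted form $D_{R_{k+1},u_{k+1}} \circ \cdots \circ D_{R_1,u_1}$ from the outside in, I would set $R_{i+1}=P_i$, $u_{i+1}=s_i$ for $1\le i\le k$, and $R_1=R$, $u_1=u$. For the top $k$ factors the required inequality $u_{i+1}=s_i<2\tau(P_i)=2\tau(R_{i+1})$ is exactly the hypothesis, so the whole proposition collapses to checking the single new factor, namely $u<2\tau(R)$.

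The two facts that drive this are additivity of $\tau$ under connected sum and its invariance under orientation reversal, both recorded earlier; together they give $\tau(Q_l\#Q_l^r)=\tau(Q_l)+\tau(Q_l^r)=2\tau(Q_l)$. In the case $l=1$ we have $(R,u)=(Q_1\#Q_1^r,2t_1)$, and the hypothesis $t_1<2\tau(Q_1)$ yields $u=2t_1<4\tau(Q_1)=2\tau(Q_1\#Q_1^r)=2\tau(R)$, as needed. In the case $l>1$ we have $u=0$, so it suffices to prove $\tau(R)>0$. Here I would first apply Theorem \ref{thm:taudjskt} to the innermost knot $K_0=D_{Q_{l-1},t_{l-1}}(Q_l\#Q_l^r,2t_l)$ with $J=Q_{l-1}$, $s=t_{l-1}$, and $t=2t_l$: the first clause applies, since $t_{l-1}<2\tau(Q_{l-1})$ and $2t_l<4\tau(Q_l)=2\tau(Q_l\#Q_l^r)$ (again from $t_l<2\tau(Q_l)$), giving $\tau(K_0)=1$. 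Writing $K_j=D_{Q_{l-1-j},t_{l-1-j}}(K_{j-1},0)$ for $1\le j\le l-2$, so that $R=K_{l-2}$, I would then induct down the tower: at each step Theorem \ref{thm:taudjskt} is applied with companion parameter $t_{l-1-j}<2\tau(Q_{l-1-j})$ and with $t=0<2=2\tau(K_{j-1})$, which again lands in the first clause and propagates $\tau=1$. Hence $\tau(R)=1>0$ and $u=0<2=2\tau(R)$.

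The argument is essentially bookkeeping, and the only step that requires genuine care is the base of the induction. That is precisely where the connected-sum identity $\tau(Q_l\#Q_l^r)=2\tau(Q_l)$ does the work: it converts the ``doubled'' framing $2t_l$ produced by the collapse back into the per-knot hypothesis $t_l<2\tau(Q_l)$, and likewise for $2t_1$ in the $l=1$ case. Once that identity is in place, Theorem \ref{thm:taudjskt} simply carries $\tau=1$ up the entire chain of $D_{Q_i,t_i}$'s with no further input, so no subtlety remains.
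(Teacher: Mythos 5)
Your proof is correct and is essentially the paper's own argument: the paper states Proposition \ref{prop:collapse} with no written proof, asserting it follows immediately from Theorem \ref{thm:taudjskt}, equations \eqref{eq:solidtorus1}--\eqref{eq:solidtorus2}, and the additivity of $\tau$ under connected sum (together with the invariance $\tau(K^r)=\tau(K)$ noted in Section 2). Your write-up is precisely the verification being left to the reader --- the reindexing of the outer $k$ factors, the identity $\tau(Q\#Q^r)=2\tau(Q)$ handling the $l=1$ case and the base case, and the induction propagating $\tau=1$ up the tower via the first clause of Theorem \ref{thm:taudjskt}.
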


\begin{proof}[Proof of Theorem \ref{thm:notslice}]
For the first part of the theorem, note that in the new notation, $Wh_+(B_T(K))
= S_\TT(K)$, where every leaf of $\TT$ is labeled $D_{O,-1}$. Every label in
$\TT$ satisfies the hypotheses of Proposition \ref{prop:collapse}. Using this
proposition, we inductively collapse every pair of leaves of $\TT$ until we
have a single vertex labeled $D_{P_k,s_k} \circ \cdots \circ D_{P_1,s_1}$, for
knots $P_1, \dots, P_k$ and integers $s_1, \dots, s_k$ with $s_i < 2\tau(P_i)$.
Thus, the knot $D_{P_k,s_k} \circ \cdots \circ D_{P_1,s_1} (K)$ is a covering
link of $Wh_+(B_T(K))$. By Theorem \ref{thm:taudjskt}, $\tau(D_{P_k,s_k} \circ
\cdots \circ D_{P_1,s_1} (K))=1$. Thus, $D_{P_k,s_k} \circ \cdots \circ
D_{P_1,s_1} (K)$ cannot be smoothly slice in a rational homology $4$-ball, so
$Wh_+(B_T(K))$ cannot be smoothly slice.

\begin{figure} \centering
\includegraphics[scale=0.8]{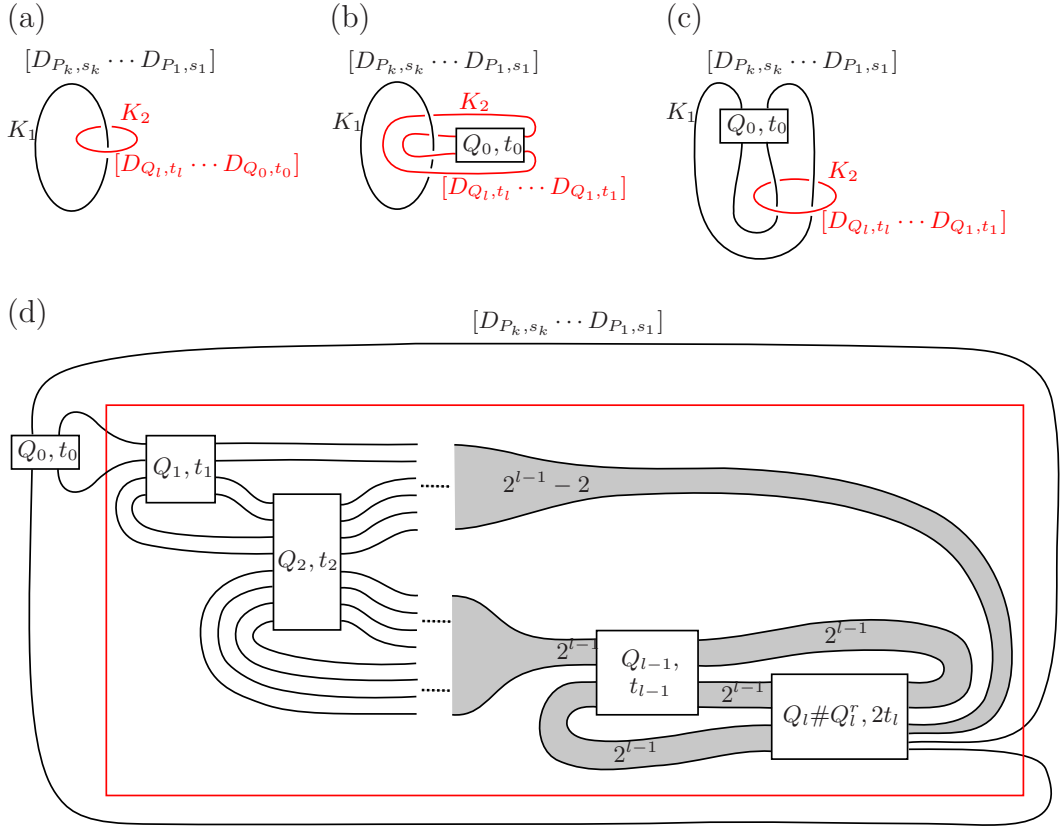}
\caption{The proof of the second part of Theorem \ref{thm:notslice}.}
\label{fig:hopf}
\end{figure}

For the second part, the same argument as above shows that by using covering
moves, we may replace $Wh_+(B_{T_1}(K_1) \cup B_{T_2}(K_2))$ with a
two-component link of the form
\[
D_{P_k,s_k} \circ \cdots \circ D_{P_1,s_1} (K_1) \cup D_{Q_l,t_l} \circ \cdots
\circ D_{Q_0,t_0} (K_2),
\]
shown in Figure \ref{fig:hopf}(a), where $s_i < 2\tau(P_i)$ and $t_i <
2\tau(Q_i)$ for all $i$. (We start with $Q_0$ and $t_0$ for notational
reasons.) After the isotopies in Figure \ref{fig:hopf}(a--c), note the
similarity to Figure \ref{fig:solidtorus1}. We may thus proceed just as in the
proof of Lemma \ref{lemma:solidtorus}, with suitable modifications to Figures
\ref{fig:solidtorus2}--\ref{fig:solidtorus4}, to obtain the knot shown in
Figure \ref{fig:hopf}(d) as a covering link of $Wh_+(B_{T_1}(K_1) \cup
B_{T_2}(K_2))$. This knot is
\[
D_{P_k,s_k} \circ \dots \circ D_{P_1,s_1} (D_{R,u} (Q_0,t_0)),
\]
where $(R,u)$ is as in \eqref{eq:solidtorus2}. This knot has $\tau=1$ by
Theorem \ref{thm:taudjskt}, completing the proof.
\end{proof}

\section{Strongly quasipositive knots and sliceness} \label{sec:quasipositive}

We conclude with a brief discussion of strongly quasipositive knots, which
played a role in an earlier version of this paper.

A knot or link $L$ is called \emph{quasipositive} if it is the closure of a
braid that is the product of conjugates of the standard positive braid
generators $\sigma_i$ (but not their inverses). It is called \emph{strongly
quasipositive} if it is the closure of a braid that is the product of words of
the form $\sigma_i \dots \sigma_{j-1} \sigma_j \sigma_{j-1}^{-1} \dots
\sigma_i^{-1}$ for $i<j$. A strongly quasipositive link naturally admits a
particular type of Seifert surface determined by this braid form, and an
embedded surface in $S^3$ is called \emph{quasipositive} if it is isotopic to
such a surface. In other words, a link is strongly quasipositive if and only if
it bounds a quasipositive Seifert surface.

A link $L$ is quasipositive if and only if it is a \emph{transverse $\C$-link}:
the transverse intersection of $S^3 \subset \C^2$ with a complex curve $V$. If
$L$ is strongly quasipositive, then the Seifert surface determined by the braid
form is isotopic to $V \cap B^4$.

For a knot $K$ and $t \in \Z$, let $A(K,t)$ be an annulus in $S^3$ whose core
circle is $K$ and whose two boundary components are $t$-framed longitudes of
the core. Given two unlinked annuli $A$ and $A'$, let $A*A'$ denote the surface
obtained by plumbing $A$ and $A'$ together. (To be precise, we must orient the
core circles of $A$ and $A'$ and specify the sign of their intersection in $A *
A'$.)

The following is a summary of some of Rudolph's results \cite{RudolphAnnuli,
RudolphObstruction, RudolphPlumbing} on strongly quasipositive knots:
\begin{thm} \label{thm:rudolph} $ \ $
\begin{enumerate}
\item If $K$ is a strongly quasipositive knot other than the unknot, then $K$
is not smoothly slice.
\item A knot $K$ is strongly quasipositive if and only if $A(K,0)$ is a
quasipositive surface.
\item If $K$ and $K'$ are strongly quasipositive, then $K \# K'$ is strongly
quasipositive.
\item The annulus $A(K,t)$ is quasipositive if and only if $t \le TB(K)$, where $TB(K)$
denotes the maximal Thurston--Bennequin number of $K$.
\item If $A$ and $A'$ are annuli, then the surface $A*A'$ is quasipositive if and only if
$A$ and $A'$ are both quasipositive.
\end{enumerate}
\end{thm}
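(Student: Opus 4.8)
The plan is to treat Theorem \ref{thm:rudolph} as an assembly of Rudolph's results \cite{RudolphAnnuli, RudolphObstruction, RudolphPlumbing}, organized around a single dictionary: a \emph{quasipositive surface} is exactly a piece $V \cap B^4$ of a complex curve $V \subset \C^2$ meeting $S^3$ transversely, and strong quasipositivity of a knot $K$ is the statement that $K$ bounds such a surface of Seifert type (a Bennequin surface built from braid disks joined by positively twisted bands). With this correspondence in hand, I would split the five parts into the \emph{constructive} directions, proved by manipulating band or braid presentations, and the \emph{obstructive} directions, which all rest on one gauge-theoretic input: the local Thom conjecture of Kronheimer and Mrowka, equivalently the slice--Bennequin inequality, which says that a piece of a complex curve realizes the slice genus of its boundary.

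First I would dispatch the soft, constructive directions. For (3), concatenating the band-generator words for braids whose closures are $K$ and $K'$ (and merging them with one additional band) yields a braid on a larger index that is again a product of band generators and whose closure is $K \# K'$; equivalently, the boundary connected sum of the two Bennequin surfaces is quasipositive. For the ``if'' direction of (5), a Murasugi sum (plumbing) of two quasipositive surfaces admits an obvious combined quasipositive band presentation, so $A * A'$ is quasipositive whenever $A$ and $A'$ are \cite{RudolphPlumbing}. For the ``if'' direction of (4), starting from a Legendrian representative of $K$ realizing $TB(K)$ and stabilizing down, one obtains for each $t \le TB(K)$ a Legendrian representative whose contact framing is $t$; the contact-framed annulus is a piece of a complex curve, hence quasipositive. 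Statement (2) is then the specialization of this dictionary at the Seifert framing $t = 0$: a quasipositive band presentation of $A(K,0)$ is interchangeable with a quasipositive Seifert surface for its core, which is precisely strong quasipositivity of $K$ \cite{RudolphAnnuli}.

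The obstructive directions are where the real work lies. Part (1) follows once strong quasipositivity produces a Bennequin surface: by the adjunction inequality this surface minimizes genus in $B^4$, so $g_4(K)$ equals its genus, which coincides with the Seifert genus $g(K) \ge 1$ for any nontrivial $K$ (only the unknot bounds a disk); equivalently $\tau(K) = g(K) > 0$ by \cite{LivingstonComputations}, so $K$ is not slice by \cite{OSz4Genus}. For the ``only if'' direction of (4), quasipositivity of $A(K,t)$ exhibits it as a piece of a complex curve whose core inherits a complex, hence contact, framing equal to $t$; the slice--Bennequin inequality then bounds this framing above, and Rudolph identifies the sharp bound as $TB(K)$, giving $t \le TB(K)$. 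The ``only if'' direction of (5) is the subtlest: writing $A = A(K,t)$ and $A' = A(K',t')$, I would argue that quasipositivity of $A * A'$ forces each framing $t, t'$ below the respective maximal Thurston--Bennequin number by applying the slice--Bennequin bound to the two summands sitting inside the ambient complex curve, whence each annulus is quasipositive by part (4).

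The hard part throughout is this final conversion: extracting from a single global genus (adjunction) inequality a set of \emph{local} constraints---that quasipositivity of a Murasugi sum is inherited by its summands, and that the admissible framings are governed exactly by $TB$. I expect the converse in (5) to be the main obstacle, since it requires controlling how the complex-curve structure of the total surface restricts to each plumbing factor, rather than merely bounding one numerical invariant of a single knot.
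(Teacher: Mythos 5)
Note first that the paper does not prove this theorem at all: it is stated as a summary of Rudolph's results and simply cited to \cite{RudolphAnnuli, RudolphObstruction, RudolphPlumbing}, with one remark on part (1) --- that it rests on Kronheimer--Mrowka's theorem \cite{KronheimerMrowka} that complex curves are genus-minimizing, so that $g_4(K)=g(K)>0$ for a nontrivial strongly quasipositive knot, whence non-sliceness. Your sketch of part (1) matches that remark (with the added, correct, appeal to \cite{LivingstonComputations} and \cite{OSz4Genus}), and your constructive sketches of (2), (3), and the ``if'' halves of (4) and (5) are consistent with Rudolph's band-presentation constructions.

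The genuine gap is in your plan for the two ``only if'' directions, which cannot work as stated. For (4), the slice--Bennequin inequality bounds contact framings by $2g_4(K)-1$, which is in general far weaker than $TB(K)$: for the left-handed trefoil, $TB(K)=-6$ while $2g_4(K)-1=1$. So no adjunction-type genus bound can yield the sharp conclusion $t\le TB(K)$ (nor, combined with (2), detect that the left-handed trefoil fails to be strongly quasipositive). The real content of Rudolph's argument is the extraction, from a quasipositive braided annulus, of a Legendrian representative of the core whose contact framing is $t$; once that is done, $t\le TB(K)$ holds by the definition of $TB$ as a maximum, with no gauge theory involved --- and your phrase ``Rudolph identifies the sharp bound as $TB(K)$'' is just a citation of the statement being proved. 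For (5), your argument is circular: to apply a slice--Bennequin bound ``to the two summands sitting inside the ambient complex curve'' you would need each summand to itself be a piece of a complex curve, which is exactly the quasipositivity you are trying to establish; a subsurface of a piece of a complex curve is not a priori such a piece, and even if it were, you would again only obtain the $2g_4-1$ bound rather than the $TB$ bound. Rudolph's route in \cite{RudolphPlumbing} is entirely different and non-gauge-theoretic: a Seifert surface is quasipositive if and only if it embeds as a full subsurface of the fiber surface of a torus link, Murasugi summands are full subsurfaces of the sum, and fullness is transitive. In particular, your organizing principle --- that all the obstructive directions rest on the single gauge-theoretic input of Kronheimer--Mrowka --- is incorrect: gauge theory enters only in part (1).
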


Rudolph's original proof of (1) relies on the fact that complex curves are
genus-minimizing, a major theorem proven by Kronheimer and Mrowka
\cite{KronheimerMrowka} using gauge theory. Since a strongly quasipositive knot
$K$ has a Seifert surface that is isomorphic to a complex curve, we thus see
that $g_4(K) = g(K)$; in particular, if $K$ is nontrivial, then $g_4(K)>0$.
Subsequently, Livingston \cite{LivingstonComputations} proved that both of
these genera are equal to $\tau(K)$ when $K$ is strongly quasipositive. (For
more on the relationship between $\tau$ and quasipositivity, see Hedden
\cite{HeddenPositivity}.)

The untwisted $\pm$ Whitehead double of $K$, $Wh_{\pm}(K)$, is the boundary of
$A(K,0)*A(O,\mp1)$, where $O$ denotes the unknot. Thus, Theorem
\ref{thm:rudolph} implies that if $K$ is strongly quasipositive and nontrivial,
then $Wh_+(K)$ is strongly quasipositive and nontrivial, hence not smoothly
slice. More generally, the plumbing $A(J,s)*A(K,t)$ is a Seifert surface for
$D_{J,s}(K,t)$, so if $J$ and $K$ are strongly quasipositive and $s,t \le 0$,
then $D_{J,s}(K,t)$ is strongly quasipositive. Moreover, if neither of the
pairs $(J,s)$ and $(K,t)$ equals $(O,0)$, then $D_{J,s}(K,t)$ is nontrivial,
hence not smoothly slice. Furthermore, in this case $\tau(D_{J,s}(K,t))=1$
since the $\tau$ invariant of a strongly quasipositive knot is equal to its
genus by a result of Livingston \cite{LivingstonComputations}. Using this
observation, we may prove a weakened version of Theorem \ref{thm:notslice} in
which the knot $K$ is assumed to be strongly quasipositive without ever making
reference to Theorem \ref{thm:taudjskt}.

\bibliography{bibliography}
\bibliographystyle{amsplain}

\end{document}